\theoremstyle{plain}
\newtheorem{thm}{Theorem}[section]
\crefname{thm}{Theorem}{Theorems}
\theoremstyle{plain}
\newtheorem{lem}[thm]{Lemma}
\crefname{lem}{Lemma}{Lemmas}
\theoremstyle{plain}
\theoremstyle{plain}
\newtheorem*{claim*}{Claim}
\crefname{claim}{Claim}{Claims}
\theoremstyle{definition}
\newtheorem{defn}[thm]{Definition}
\theoremstyle{plain}
\newtheorem{conjecture}[thm]{Conjecture}
\crefname{conjecture}{Conjecture}{Conjectures}
\theoremstyle{plain}
\newtheorem{prop}[thm]{Proposition}
\crefname{prop}{Proposition}{Propositions}
\theoremstyle{definition}
\theoremstyle{definition}
\newtheorem*{rem*}{Remark}
\theoremstyle{plain}
\theoremstyle{plain}
\theoremstyle{observation}
\date{}
\let\originalleft\left
\let\originalright\right
\renewcommand{\left}{\mathopen{}\mathclose\bgroup\originalleft}
\renewcommand{\right}{\aftergroup\egroup\originalright}
\renewcommand*{\UrlTildeSpecial}{%
  \do\~{%
    \mbox{%
      \fontfamily{ptm}\selectfont
      \textasciitilde
    }%
  }%
}%
\let\Url@force@Tilde\UrlTildeSpecial
\let\OLDthebibliography\thebibliography
\renewcommand\thebibliography[1]{
  \OLDthebibliography{#1}
  \setlength{\parskip}{0pt}
  \setlength{\itemsep}{3pt plus 0.3ex}
}
\numberwithin{equation}{section}
\begin{document}
\title{Universality of random permutations}
\author{Xiaoyu He \thanks{Department of Mathematics, Stanford University, Stanford, CA 94305.
Email: \href{alkjash@stanford.edu}{\nolinkurl{alkjash@stanford.edu}}.
Research supported by a NSF Graduate Research Fellowship.} 
\and Matthew Kwan \thanks{Department of Mathematics, Stanford University, Stanford, CA 94305.
Email: \href{mattkwan@stanford.edu}{\nolinkurl{mattkwan@stanford.edu}}.
Research supported in part by SNSF project 178493.}}

\maketitle
\global\long\def\RR{\mathbb{R}}%
\global\long\def\FF{\mathbb{F}}%
\global\long\def\QQ{\mathbb{Q}}%
\global\long\def\E{\mathbb{E}}%
\global\long\def\Var{\operatorname{Var}}%
\global\long\def\CC{\mathbb{C}}%
\global\long\def\NN{\mathbb{N}}%
\global\long\def\ZZ{\mathbb{Z}}%
\global\long\def\GG{\mathbb{G}}%
\global\long\def\tallphantom{\vphantom{\sum}}%
\global\long\def\tallerphantom{\vphantom{\int}}%
\global\long\def\supp{\operatorname{supp}}%
\global\long\def\one{\mathbbm{1}}%
\global\long\def\d{\operatorname{d}}%
\global\long\def\Unif{\operatorname{Unif}}%
\global\long\def\Po{\operatorname{Po}}%
\global\long\def\Bin{\operatorname{Bin}}%
\global\long\def\Ber{\operatorname{Ber}}%
\global\long\def\Geom{\operatorname{Geom}}%
\global\long\def\Rad{\operatorname{Rad}}%
\global\long\def\floor#1{\left\lfloor #1\right\rfloor }%
\global\long\def\ceil#1{\left\lceil #1\right\rceil }%
\global\long\def\falling#1#2{\left(#1\right)_{#2}}%
\global\long\def\cond{\,\middle|\,}%
\global\long\def\su{\subseteq}%
\global\long\def\row{\operatorname{row}}%
\global\long\def\col{\operatorname{col}}%
\global\long\def\spn{\operatorname{span}}%
\global\long\def\eps{\varepsilon}%
\global\long\def\S{\mathcal{S}}%

\global\long\def\mk#1{\textcolor{blue}{\textbf{[MK comments:} #1\textbf{]}}}
\global\long\def\af#1{\textcolor{red}{\textbf{[AF comments:} #1\textbf{]}}}
\global\long\def\xh#1{\textcolor{cyan}{\textbf{[XH comments:} #1\textbf{]}}}

\begin{abstract}
It is a classical fact that for any $\varepsilon>0$,
a random permutation of length $n=\left(1+\varepsilon\right)k^{2}/4$
typically contains an increasing subsequence of length $k$. As a far-reaching
generalization, Alon conjectured that a random permutation of this same length $n$ is typically \emph{$k$-universal}, meaning that it simultaneously contains
every pattern of length $k$. He also made the simple observation
that for $n=O(k^{2}\log k)$, a random length-$n$ permutation is typically $k$-universal. We make the first significant progress towards Alon's conjecture by showing that $n=2000k^{2}\log\log k$ suffices.
\end{abstract}

\section{Introduction}

A mathematical structure is said to be \emph{universal} if it contains all possible substructures, in some specified sense. This notion may have been first considered in a 1964 paper by Rado~\cite{Rad64}, in which he found examples of graphs, simplicial complexes and functions which are universal in various ways. Another famous universal structure is a \emph{de Bruijn sequence} (with parameters $k$ and $q$, say), which is a string over a size-$q$ alphabet in which every possible length-$k$ string appears exactly once as a substring.

One topic that has received particular attention over the years is the case of universality for finite graphs. We say that a graph is $k$-\emph{universal} (or $k$-\emph{induced-universal}) if it contains every graph on $k$ vertices as an induced subgraph. The problems that have received the most interest in this area are (1) to find a $k$-universal graph with as few vertices as possible, and (2) to understand for which $n$ a ``typical'' $n$-vertex graph is $k$-universal. These problems are related to the problem of finding optimal \emph{adjacency labeling schemes} in theoretical computer science; for more details we refer the reader to \cite{alstrup2015adjacency} and the references therein.

In an exciting recent paper by Alon~\cite{Alo17}, both of these problems were effectively resolved. He showed with a probabilistic proof that there exists a $k$-universal graph with $(1+o(1))2^{(k-1)/2}$ vertices, asymptotically matching a lower bound due to Moon~\cite{moon1965minimal}. Alon also showed that as soon as $n$ is large enough that a random $n$-vertex graph typically contains a $k$-vertex clique and a $k$-vertex independent set, then such a random graph is typically also $k$-universal. His proofs involved a classification of graphs according to their numbers of automorphisms, taking advantage of the fact that graphs with few automorphisms are easier to embed into random graphs.

Alon's work essentially closes the book on the study of $k$-universal graphs, but substantial challenges remain in many other settings. One important example is the case of permutations, where there is no natural notion of an automorphism, and no natural scheme to embed sub-permutations using ``quasirandomness'' conditions. Let $\S_n$ be the set of all permutations of the $n$-element set $[n]:=\{1,\dots,n\}$. We say that a permutation $\sigma\in\S_{n}$ \emph{contains}
a \emph{pattern} $\pi\in\S_{k}$, and write $\pi \in \sigma$, if there are indices $1\le x_1<\dots<x_k\le n$
such that for $1\le i,j\le k$ we have $\sigma\left(x_{i}\right)<\sigma\left(x_{j}\right)$
if and only if $\pi\left(i\right)<\pi\left(j\right)$. Say that
$\sigma$ is \emph{$k$-universal} or a \emph{$k$-superpattern} if
it contains every $\pi\in\S_{k}$. As before, there are two main directions to consider: (1) finding the shortest possible $k$-universal permutation and (2) understanding for which $n$ a typical length $n$ permutation is $k$-universal.

As a simple lower bound for both problems, note that if $\sigma\in \S_n$ is $k$-universal, then we must have $\binom n k\ge k!$, since $\sigma$ contains $k!$ distinct patterns.  Using Stirling's approximation and the fact $\binom{n}{k}\leq n^k/k!$, we deduce the lower bound $$n\geq \left(\frac1{e^2}-o(1)\right)k^2.$$ For the first problem (of finding short $k$-universal permutations), this lower bound is not too far from best-possible: Miller~\cite{Mil09} constructed a $k$-universal permutation with length  $n\leq (1/2+o(1))k^2$, and the $o\left(1\right)$-term
was recently improved by Engen and Vatter~\cite{EV18}. This constant $1/2$ was conjectured to be tight by Eriksson, Eriksson, Linusson
and W\"astlund~\cite{EELW07}, while the constant $1/e^2$ from the lower bound was conjectured to be tight by Arratia~\cite{Arr99}.

Regarding universality of \emph{random} permutations, much less is known. Note that containing the identity permutation $1_k\in \S_k$ is equivalent to containing an increasing sequence of length $k$, and the longest increasing subsequence of a typical $\sigma\in\S_n$ is known\footnote{This fact is actually surprisingly difficult to prove, and is due independently to Logan and Shepp~\cite{LS77} and to Vershik and Kerov~\cite{VK77}
(see also \cite{AD95}). The study of increasing subsequences in random permutations has a rich history, see for example the survey \cite{Rom15}.} to be of length $(2+o(1))\sqrt n$. It follows that we cannot hope for a typical $\sigma\in \S_n$ to be $k$-universal unless $n\ge (1/4+o(1))k^2$. In 1999, Alon made the following striking conjecture (see \cite{Alo16,Arr99}).
\begin{conjecture}
\label{conj:noga}For a fixed $\varepsilon>0$, a random permutation of length $(1+\varepsilon)k^2/4$ is w.h.p.\footnote{We say that an event holds \emph{with high probability}, or w.h.p. for short,
if it holds with probability $1-o\left(1\right)$. Here and for the
rest of the paper, asymptotics are as $k\to\infty$ and/or $n\to\infty$.}\ $k$-universal.
\end{conjecture}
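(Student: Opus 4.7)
The approach I would attempt is a union bound over all $k!$ patterns: show that for every fixed $\pi\in\S_k$ and $n=(1+\varepsilon)k^2/4$,
\[
\Pr[\pi\notin\sigma]\le \exp(-k\log k-\omega(1)),
\]
so that summing over $\pi$ yields $\Pr[\sigma\text{ is not $k$-universal}]=o(1)$. As a heuristic sanity check, the expected number of copies of any fixed $\pi$ in $\sigma$ is $\binom{n}{k}/k!\approx((1+\varepsilon)e^2/4)^k/\mathrm{poly}(k)$, which is exponentially large, so each individual pattern is heavily represented in mean.

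To make this quantitative, I would partition the domain $[n]$ into $k$ consecutive blocks $B_1,\dots,B_k$ of width $(1+\varepsilon)k/4$ and the value range $[n]$ into $k$ vertical slots $S_1,\dots,S_k$ of the same height. A candidate embedding of $\pi$ picks, for each $i$, one point of block $B_i$ whose $\sigma$-value lies in $S_{\pi(i)}$. A naive estimate --- each (block, slot) cell is empty with probability bounded away from $0$ --- yields only $\Pr[\pi\notin\sigma]\le c^k$, nowhere near $\exp(-k\log k)$. I would therefore add slack by allowing block $i$ to use any point whose value lies in a \emph{window} of slots around $S_{\pi(i)}$, and then verify via a Hall/SDR argument that such a system of distinct representatives exists w.h.p.; the window width must be calibrated carefully against $\varepsilon$.

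For the extremal patterns $1_k$ and its reverse, the embedding reduces to finding a monotone subsequence of length $k$. The Logan--Shepp--Vershik--Kerov theorem, combined with large-deviation refinements (e.g.\ Deuschel--Zeitouni for the lower tail of the longest increasing subsequence), gives failure probability $\exp(-\Theta(k^2))$, comfortably inside the budget. For patterns $\pi$ of intermediate monotone structure, I would decompose $\pi$ into its maximal monotone runs and apply LIS-type tail bounds to each run, combining them with the block-window matching to control the global embedding probability. Very ``shuffled'' patterns $\pi$ (many descents) should be the easiest, since there the block matching has many independent degrees of freedom.

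The main obstacle is closing the gap between the naive block-product bound of $c^k$ and the required $\exp(-k\log k)$ decay for a generic $\pi$. We need to extract roughly $\log k$ extra bits of pattern-specific information per position of $\pi$, which a crude worst-case block argument does not deliver. A plausible route is to condition on the RSK shape of $\sigma$: once the shape is sufficiently balanced (which already holds except with probability $\exp(-\Theta(k^2))$), one hopes that $\sigma$ is ``universally embeddable'' for \emph{all} $\pi$ simultaneously, reducing the $k!$-fold union bound to a single shape estimate. Making such a conditional-embedding argument quantitative -- in essence, showing that a typical RSK shape forces containment of every $\pi\in\S_k$ -- appears to be the heart of the conjecture, and I expect it is where any actual proof must do its real work.
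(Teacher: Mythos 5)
This statement is \cref{conj:noga}, Alon's conjecture, and the paper does not prove it: the main results here only establish $k$-universality at length $2000k^2\log\log k$ (\cref{thm:main}), and at $20k^2$ for the quasirandom subfamily $\mathcal Q_k$ (\cref{thm:almost-all}). So there is no ``paper proof'' to compare against; your proposal must be judged as an attempted proof of an open conjecture, and as such it has a genuine, admitted gap at its core. The union-bound strategy you describe requires $\Pr(\pi\notin\sigma)=o(1/k!)=e^{-k\log k-\omega(1)}$ for \emph{every} $\pi\in\S_k$ at $n=(1+\varepsilon)k^2/4$, and your block/window construction delivers only $c^k$. You acknowledge this, but the two devices you offer to bridge it do not work as stated. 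First, decomposing $\pi$ into monotone runs and invoking LIS lower-tail bounds (Deuschel--Zeitouni type, $e^{-\Theta(n)}$) handles the monotone extremes, but a generic $\pi$ has $\Theta(k)$ runs of length $O(1)$, so this degenerates back to the block-product bound; the genuinely hard patterns are precisely the ``hybrid'' ones discussed in \cref{sec:concluding} (e.g.\ the tilted grid and the family $\mathcal L_k$), which are neither monotone nor quasirandom, and for which no $o(1/k!)$ containment bound is known even at $n=1000k^2$ (that is exactly \cref{conj:containment-strong}, stated here as a conjecture). Second, the RSK-shape conditioning idea is not merely unquantified, it is structurally suspect: pattern containment is not a function of the RSK shape, and coarse ``balancedness'' statistics are known to be insufficient for universality --- Fox's construction \cite[Theorem~6]{Fox13}, cited after \cref{lem:matrix-dominated}, gives a matrix of density $1-o(1)$ that misses almost all patterns, and the same construction shows $\Pr(\pi\notin\sigma)\ge\exp(-k^{3/2+o(1)})$ for almost all $\pi$ when $n=k^{2+o(1)}$, so any amplification must exploit pattern-specific structure rather than a single global event.

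For contrast, the route actually taken in this paper is quite different: couple $\sigma$ with a random zero-one matrix (\cref{lem:matrix-dominated}), run a multi-threaded greedy scan whose threads are nearly independent when $\pi$ has no long $\Delta$-shifts, and handle the remaining patterns by a structure-vs-randomness decomposition (\cref{lem:structure-vs-randomness}) with only $e^{O(k\log\log k)}$ structured parts. Even with all of that, the method stalls at $n=\Theta(k^2\log\log k)$; closing the gap down to $(1+\varepsilon)k^2/4$ --- the step your proposal leaves open --- is precisely the content of the conjecture, so the proposal cannot be accepted as a proof.
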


Intuitively, \cref{conj:noga} can be justified by comparison to universality in graphs: in much the same way that cliques and independent sets are the ``hardest'' subgraphs to find in a random graph, it is believed that monotonically increasing and decreasing patterns are the hardest patterns to find in a random permutation. We also remark that \cref{conj:noga} contradicts the aforementioned conjecture by Eriksson, Eriksson, Linusson
and W\"astlund.

In the ``ordered'' setting of random permutations, most of the standard tools used in the unordered setting of graphs are not applicable, and \cref{conj:noga} seems rather challenging to prove. Indeed, in a recent discussion of the problem, Alon~\cite{Alo16} highlighted the more modest problem of simply showing that for $n=1000k^2$ a typical $\sigma\in \S_k$ is $k$-universal. He also observed a simple upper bound of the form $n=O(k^2\log k)$ (we will sketch a proof of this in \cref{sec:outline}). Our main result is the following substantial improvement.

\begin{thm}
\label{thm:main}A random permutation of length $2000k^2\log\log k$ is w.h.p.\ $k$-universal.
\end{thm}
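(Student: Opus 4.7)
The proof plan is a grid-partition argument refining Alon's $O(k^2\log k)$ bound. Partition both the domain and codomain of $\sigma$ into $k$ equal blocks, forming a $k\times k$ grid of cells, each with expected $\lambda := n/k^2 = 2000\log\log k$ points. Trivially, a pattern $\pi\in\S_k$ is contained in $\sigma$ whenever the $k$ ``pattern cells'' $\{(i,\pi(i)):i\in[k]\}$ are all nonempty. Alon's argument establishes that every one of the $k^2$ cells is simultaneously nonempty; this forces $\lambda \gtrsim 2\log k$ and gives the $O(k^2\log k)$ bound. At our much smaller cell size $\lambda = \Theta(\log\log k)$, in contrast, roughly $k^2/(\log k)^{\Theta(1)}$ cells are empty in expectation, so we must instead route embeddings around the empty cells.

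To obtain this flexibility, I would refine the partition to an $m\times m$ grid with $m = kL$ for a slowly growing parameter $L = L(k)$, arranged so that each fine cell has expected size $\Theta(1)$ and is nonempty with probability bounded away from zero. For each pattern $\pi$, many candidate embeddings now exist: for each index $i\in[k]$, pick a fine cell lying inside the coarse pattern cell $(i,\pi(i))$, subject to order-compatibility with $\pi$. A second-moment or Janson-type argument applied to the number of valid embeddings (those whose chosen fine cells are all nonempty) should yield a failure probability small enough to survive a union bound over patterns. The joint distribution of cell occupancies in a uniform random permutation is not independent but is close to a product of Poissons conditioned on row and column sums; this mild dependence should be manageable by a coupling argument or martingale concentration for permutation statistics.

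The main obstacle is that an exponential-in-$k$ embedding flexibility per pattern is not obviously enough to beat the $k!$-sized union bound, so the argument likely requires a dichotomy based on the structural complexity of $\pi$. ``Flexible'' patterns --- those with many ascents and many descents --- admit embeddings widely spread across the grid, where the second-moment argument is cleanest. ``Near-monotonic'' patterns (those with few ascending or descending runs) are more rigid, but also significantly fewer in number (bounded, for instance, by Eulerian-number estimates); these can be handled separately, since each such pattern is contained in a monotone subsequence of $\sigma$, and by the Logan--Shepp--Vershik--Kerov theorem $\sigma$ contains a monotone subsequence of length $(2+o(1))\sqrt{n} \gg k$ with high probability. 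Balancing the parameter $L$ between these two regimes, and tightly controlling cross-pattern dependencies so that the constant $2000$ is attainable, is the technical heart of the proof.
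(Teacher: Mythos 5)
Your grid-partition framework is a genuinely different route from the paper's, which instead couples $\sigma$ to a random $2k\times m$ zero-one matrix and runs a ``multi-threaded scanning'' procedure, amplifying the per-pattern failure probability by scanning along several shifted threads that are mostly disjoint. But there are concrete gaps in your plan.

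First, the structural dichotomy does not cover the whole of $\S_k$. You propose to handle ``near-monotonic'' patterns by embedding them inside a long monotone subsequence of $\sigma$, but a monotone subsequence of $\sigma$ contains only monotone patterns --- if $\pi$ is not itself monotone it simply cannot appear inside a monotone subsequence, no matter how long. More importantly, even the intended spirit of the dichotomy (quasirandom vs.\ close-to-monotone) fails: the paper's concluding section exhibits the ``tilted grid'' permutation $a\ell+b+1 \mapsto b\ell+a+1$ with $k = \ell^2$, which has longest monotone subsequence only $O(\sqrt{k})$ yet is far from quasirandom; and the family $\mathcal{L}_k$ of permutations decomposable into $\log^{10}k$ short increasing runs, of size $e^{\Theta(k\log\log k)}$, lies outside both of your classes. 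The correct notion of ``structured'' in the paper is not monotonicity but the existence of a compact $(q,b)$-shift-system encoding many self-similarities (long $\Delta$-shifts), and the decomposition lemma (\cref{lem:structure-vs-randomness}) must be iterated carefully to split an arbitrary $\pi$ into a quasirandom part and such a structured part.

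Second, the second-moment/Janson step is unlikely to produce the required tail bound. To beat the union bound over $k!$ patterns you need the probability that $\sigma$ misses a single $\pi$ to be $e^{-\Omega(k\log k)}$; a second-moment argument typically only gives polynomial decay, and Janson-type inequalities give bounds like $\exp(-\mu^2/(2\Delta))$ where the correlation term $\Delta$ is dominated by pairs of candidate embeddings sharing many cells --- exactly the pairs that are abundant when $\pi$ has long $\Delta$-shifts, which is the source of the difficulty in the first place. In the paper, this amplification is achieved not by a second-moment calculation but by the multi-threaded scanning argument: one shows that (conditionally on two ``good'' events $\mathcal{A}$, $\mathcal{B}$ controlling run lengths in the matrix) the $\log^2 k$ threads expose mostly disjoint entries, so the failure probabilities multiply to give $e^{-(k/2)\log^2 k}$. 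The bookkeeping for why the threads are nearly disjoint is where the quasirandom/structured decomposition is actually used: the quasirandom part guarantees small overlap via the choice of thread offsets, and the event $\mathcal{B}$ guarantees each thread spends only $O(k\log\log k)$ columns on the structured part. Your plan has no mechanism producing this amplification, and the remark about controlling ``cross-pattern dependencies'' is a red herring --- the union bound over $\pi$ requires no control of dependencies across patterns, only a good-enough bound per pattern.
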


Since there is no natural notion of symmetry for permutations, we were not able to take quite the same approach as Alon took for the graph case. However, the proof of \cref{thm:main} still proceeds via a ``structure-vs-randomness''
dichotomy (see \cite{Tao07b} for a discussion of this phenomenon in general).
In our proof of \cref{thm:main} we show that every $\pi\in\mathcal{S}_{k}$
can be decomposed into a ``structured part'' and a ``quasirandom
part''. The ``structured part'' of $\pi$ is likely to appear in $\sigma$ for one reason, and the ``quasirandom part'' is likely to appear for a different reason. We outline the proof in more detail in \cref{sec:outline}, but it is worth mentioning here that because \emph{most} permutations are entirely quasirandom in our sense, the following theorem also follows from our proof approach.

\begin{thm}
\label{thm:almost-all}For any $k\ge 1$, there is a set $\mathcal{Q}_{k}\subseteq\mathcal{S}_{k}$
of $\left(1-o\left(1\right)\right)k!$ length-$k$ permutations such that w.h.p.\ a random permutation of length $20k^2$ contains every $\pi \in \mathcal{Q}_k$.
\end{thm}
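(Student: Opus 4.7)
My plan is to extract, from the proof of \cref{thm:main}, the portion that handles the ``quasirandom'' part of a permutation, and observe that this portion alone requires only $n=O(k^2)$: the $\log\log k$ overhead in \cref{thm:main} comes entirely from the structured part. Concretely, define $\mathcal{Q}_k\su\mathcal{S}_k$ to be the set of permutations $\pi$ that are ``entirely quasirandom'' in the sense that their structure-vs-randomness decomposition has trivial structured part. A workable definition is a discrepancy condition: for a threshold $\ell=\ell(k)=o(k)$, take $\pi\in\mathcal{Q}_k$ iff for every position interval $I\su[k]$ with $|I|\ge\ell$ and every value interval $J\su[k]$, the count $|\{i\in I:\pi(i)\in J\}|$ agrees with its expectation $|I||J|/k$ up to a $(1+o(1))$ multiplicative factor.

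The proof then has three ingredients. First, $|\mathcal{Q}_k|\ge(1-o(1))k!$, which follows from a standard probabilistic calculation: for a uniformly random $\pi\in\mathcal{S}_k$ and a single pair $(I,J)$, the count $|\{i\in I:\pi(i)\in J\}|$ is hypergeometrically distributed with mean $|I||J|/k$, and Chernoff/Hoeffding for hypergeometric tails combined with a union bound over the $O(k^4)$ relevant pairs of intervals shows $\pi\in\mathcal{Q}_k$ with probability $1-o(1)$. Second, for each individual $\pi\in\mathcal{Q}_k$, a uniformly random $\sigma\in\mathcal{S}_n$ with $n=20k^2$ should contain $\pi$ except with probability $o(1/k!)$. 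Third, a union bound over $\mathcal{Q}_k$ gives $\Pr[\exists\pi\in\mathcal{Q}_k:\pi\notin\sigma]=o(1)$.

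The main obstacle is the middle step---achieving a per-$\pi$ failure probability of $o(1/k!)$. A naive $k\times k$ cell-based embedding (asking every diagonal cell $(i,\pi(i))$ of an $(n/k)\times(n/k)$ grid to contain a point of $\sigma$) gives a failure bound of order $ke^{-20}$, nowhere near the required $1/k!\asymp e^{-k\log k}$. To beat this I would analyze directly the number $X_\pi$ of $k$-subsets $X\su[n]$ for which $\sigma$ restricted to $X$ has pattern $\pi$. Its expectation $\binom{n}{k}/k!\sim(20e^2)^k/\Theta(k)$ is enormous by Stirling, and the discrepancy condition on $\pi\in\mathcal{Q}_k$ should force the correlations between overlapping embeddings (indexed by the overlap size $j=|X\cap X'|$) to decay rapidly in $j$. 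A Kim--Vu-style polynomial concentration inequality, a high-moment argument, or a suitable martingale on the transposition chain of $\sigma$ should then push $\Pr[X_\pi=0]$ well below $1/k!$. Executing this concentration estimate rigorously---and identifying the precise notion of ``quasirandom'' that makes it go through---is where the bulk of the work in the proof should lie.
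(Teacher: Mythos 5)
Your high-level reading is accurate: the quasirandom part of the argument only needs $n=O(k^2)$, and the $\log\log k$ factor in \cref{thm:main} is there solely to accommodate the structured part. But the proposal has two genuine gaps, and the second is where the whole difficulty of the theorem lives. The first is definitional: your interval-discrepancy notion of quasirandomness is not what the paper uses, and there is no indication that it is the right property for any embedding argument. The paper takes $\mathcal{Q}_k=\{\pi: L_\Delta(\pi)\le 3\sqrt k \text{ for all }\Delta\in[k]\}$, where $L_\Delta(\pi)$ is the maximum length of a pair of $\Delta$-shifted subsequences in $\pi$. That condition is engineered so that when the scanning algorithm runs two overlapping ``threads'' (attempting to embed $\pi$ into two windows of $k$ consecutive rows of the auxiliary zero-one matrix), the threads can collide in at most $3\sqrt k$ rows. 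Interval discrepancy controls something else — uniformity of $\pi$ as a bipartite map between position and value intervals — and would have to be fed into an entirely different embedding scheme, which you have not described. (Your counting step for $|\mathcal{Q}_k|$ also needs $|J|\ge\ell$, not just $|I|\ge\ell$, else the multiplicative $(1+o(1))$ condition is vacuous for small value intervals, but this is minor.)

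The second gap is the concentration step, which you flag as unproven, and I do not believe the tools you name can deliver it. You want $\Pr[X_\pi=0]=o(1/k!)=e^{-(1+o(1))k\log k}$, where $X_\pi$ counts the $k$-subsets of $[n]$ inducing pattern $\pi$. This $X_\pi$ is a degree-$k$ polynomial of the random permutation. Kim--Vu-type polynomial concentration degrades badly as the degree grows with the problem size, and for degree $d=k$ its error terms swamp the target bound; Janson's inequality has no obvious analogue for permutations, where the elementary indicators are far from independent; and transposition-chain martingales give concentration of width $O(\sqrt n)$ for Lipschitz functionals, which cannot push a count with mean $\binom{n}{k}/k!\approx(20e^2)^k$ all the way to zero with probability $e^{-k\log k}$ (and $X_\pi$ is not remotely Lipschitz in the transposition metric anyway). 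The paper avoids this altogether via an adaptive exposure argument: it runs $\log^2 k$ greedy scanning threads, conditions globally on a w.h.p.\ event $\mathcal{A}$ (no long runs of zeros in the auxiliary matrix) so that every thread that fails must have exposed $\ge 4k$ fresh, still-fair bits, and multiplies $\log^2 k$ per-thread Chernoff bounds to get $e^{-(k/2)\log^2 k}$. The union bound over $\pi\in\mathcal{Q}_k$ is carried out in that conditioned space, sidestepping the need for a clean unconditional per-$\pi$ bound; the paper's concluding section indicates that a per-$\pi$ bound of the form $e^{-\Omega(k^{5/4})}$ can be extracted, but only by a more delicate variant of the same conditioning trick, not by a moment or concentration computation. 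Some adaptive revelation mechanism of this kind appears essential to close your middle step.
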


The definition of the set $\mathcal{Q}_k$ is too technical to describe here, but it is completely explicit, see \cref{sec:quasirandom}. We made no attempt to optimize the constants in \cref{thm:main,thm:almost-all},
but we believe new ideas would be required to push $n$ down to $\left(1+o\left(1\right)\right)k^{2}/4$
in \cref{thm:almost-all}.

Of course, \cref{conj:noga,thm:main,thm:almost-all} can also be interpreted in terms of \emph{counting} $k$-universal permutations, and one natural avenue towards \cref{conj:noga} is to study the number of permutations $\sigma\in \S_n$ avoiding a specific pattern $\pi \in \S_k$. To be precise, given $\pi\in \S_k$, we let 
$$\S_n(\pi):=\{\sigma \in \S_n \mid  \sigma \textrm{ does not contain } \pi\}.$$ 
If we could prove that $|\S_n(\pi)|= o\left(n!/k!\right)$ for all $\pi \in \S_k$, it would follow that there are at least $n!-o(n!)$ permutations in $\S_n$ which are $k$-universal, and therefore that w.h.p. a random permutation of length $n$ is $k$-universal. The problem of estimating $|\S_n(\pi)|$ has a long and rich history, largely in the regime where $k$ is fixed and $n$ is large (we refer the reader to the survey \cite{claesson2008classification}, the book \cite{bona2016combinatorics}, and the references therein). The most important result in this area is due to Marcus and Tardos \cite{marcus2004excluded}, who resolved a conjecture of Stanley and Wilf, showing that for every $\pi \in \S_k$ there exists $c_{\pi}$ for which $|\S_n(\pi)|\leq c_{\pi}^n$. Note that for a given $k$, if we let $c_k:=\max\{c_{\pi}\mid \pi\in \S_k\}$, then by the result of Marcus and Tardos we obtain that there are at most $k!c_k^n$ permutations from $\S_n$ that avoid \emph{some} pattern of length $k$.

One may naively hope to prove new bounds for \cref{conj:noga} via bounds on $c_k$, but unfortunately this is hopeless. Fox~\cite{Fox13} showed that the dependence of $c_k$ on $k$ is extremely poor: $c_k=2^{\Omega(k^{1/4})}$. Nevertheless, it is still plausible that one may be able to prove $|\S_n(\pi)|=o(n!/k!)$ in the special case where $k$ is about $\sqrt n$, and this idea guides our proofs of \cref{thm:main,thm:almost-all}. In fact, it is possible to strengthen \cref{thm:almost-all} to prove the strong bound $|S_n(\pi)| \le n! e^{-\Omega(k^{5/4})}$ for $\pi \in \mathcal{Q}_k$, see \cref{sec:concluding} for details.

\subsection{Discussion and proof outline}\label{sec:outline}

Before describing our approach, we make a very convenient technical observation. For any $q\in \NN$, taking $m=\floor{n/(2q)}$, it is possible to couple a uniform random $\sigma \in \mathcal{S}_n$ with a uniform random $q\times m$ zero-one matrix $M$ (whose entries are independently zero or one with probability $1/2$), in such a way that $\sigma$ contains $\pi$ whenever $M$ ``contains'' $\pi$. Here we say a matrix $M$ contains $\pi$ if one can delete columns and rows, and change ones to zeros, to obtain the permutation matrix $P_\pi$ of $\pi$. Say that $M$ is $k$-universal if it contains all $k$-permutations, so that $\sigma$ is $k$-universal if $M$ is $k$-universal. One should think of $M$ as a reduced version of the permutation matrix $P_\sigma$ of $\sigma$, modified so that the entries of $M$ are independent. For the details of the coupling see \cref{sec:multi-threaded-scanning}.

To illustrate the utility of $M$, we start by sketching a proof of the (previously known) fact that for some constant $C$ and $n=Ck^2\log k$, a typical $\sigma\in \S_n$ is $k$-universal. Let $q=k$, so that $m=\lfloor n/(2k)\rfloor$ and $M$ is a uniform random $k\times m$ zero-one matrix. The idea is to consider a simple greedy algorithm that scans through $M$ attempting to find a copy of $\pi\in \S_k$. We will see that this algorithm fails with probability $e^{-\Omega(m)}=e^{-\Omega(n/k)}$, meaning that $|\mathcal{S}_n(\pi)| \le n!e^{-\Omega(n/k)}$. So if $n=Ck^2\log k$ for large $C$ then we can just sum over all $k!=e^{\Theta(k\log k)}$ possibilities for $\pi$.

Here is how the algorithm works. For $M$ to contain a permutation matrix $P_\pi$ means that there are indices $i_1<\dots<i_k$ such that $M(\pi(j),i_j)=1$ for each $j$. The algorithm proceeds in the simplest possible way: we scan through the indices $i=1,\dots,m$ one-by-one, repeatedly querying whether $M(\pi(1),i)=1$. Once we succeed in finding $i_1$ with $M(\pi(1),i_1)=1$ we continue running through the indices $i=i_1+1,\dots,m$, now querying whether $M(\pi(2),i)=1$ until we find $i_2$, and so on. If at least $k$ of the queries succeed during this algorithm, then it successfully finds a copy of $\pi$. Since each of the queries is independent and has success probability $1/2$, a straightforward Chernoff bound shows that the algorithm succeeds to find $\pi$ with probability $1-e^{-\Omega(m)}$, as desired.

A crucial observation about this algorithm is that regardless of whether or not it finds a copy of $\pi$, it exposes only a very small amount of information about $M$. We can imagine the algorithm tracing a ``thread'' through $M$, exposing at most one entry per column, and leaving the other entries completely untouched. The hope is to run our greedy algorithm several times to look for $\pi$ in slightly different ways, tracing different threads through $M$.

We do this as follows. Instead of taking $q=k$ we can take $q=2k$ (so $M$ has $2k$ rows), and then use our greedy algorithm to attempt to find $\pi$ in rows $1+t,\dots,k+t$, for several different choices of $t\in [k]$. That is, we scan through multiple ``shifted'' threads in the same matrix, and if any of our threads succeeds, we have that $\pi \in \sigma$.

The aim is to judiciously choose the thread indices $t$ in such a way that the threads are mostly disjoint, meaning that the searches are mostly independent of each other. If this were possible, it would allow us to amplify the probability that a single thread fails, thereby giving much stronger bounds on $|\mathcal{S}_n(\pi)|$ and thus proving $k$-universality for a smaller value of $n$.

This plan fails for two different reasons. The first is that, since we are concerned with very small probabilities of order $e^{-\Omega(n/k)}$, we cannot rule out the event that there is a very long run of zeros in some row of $M$. Indeed, the probability that a single row is entirely zero is also of order $e^{-\Omega(n/k)}$. Such a run of zeros would be simultaneously disastrous for multiple threads at once, since many threads could heavily intersect in that row. The second issue is that if a permutation is very ``self-similar'' then two different threads can ``synchronize''. For example, if there are two long sequences of indices $a_1<\dots<a_L$ and $b_1<\dots<b_L$ such that $\pi\left(a_{i}\right)=\pi\left(b_{i}\right)+\Delta$
for each $i$ (we call this situation a \emph{$\Delta$-shift} of length $L$ in $\pi$), then for any $t$, the part of thread $t$ that searches for $\pi(b_1),\dots,\pi(b_L)$ could coincide with the part of thread $t+\Delta$ that searches for $\pi(a_1),\dots,\pi(a_L)$. 

It is actually quite simple to overcome the first of these two issues because the appearance of long runs in $M$ is unlikely in absolute terms (in a typical outcome of $M$, the longest horizontal run of zeros has length  $O(\log k)$). We can simply define an event $\mathcal A$ that there are no long runs of zeros, show that $\Pr(\mathcal A)=1-o(1)$, and analyze our multi-threaded scanning procedure in the \emph{conditional} probability space where $\mathcal A$ holds, taking a union bound over all $\pi$ only in this conditional space. Note that this conditioning means that our approach no longer directly gives bounds on the number of $\pi$-free permutations $|\mathcal{S}_n(\pi)|$.

The second of the aforementioned issues is more serious, but it is only a problem if $\pi$ contains a long $\Delta$-shift, and it turns out that long $\Delta$-shifts are quite atypical. Indeed, it is possible to define a set $\mathcal Q_k$ of $(1-o(1))k!$ ``quasirandom'' permutations $\pi$ which have no long $\Delta$-shifts, so that no two threads can ``synchronize'' too much. This yields a proof of \cref{thm:almost-all}, the details of which are in \cref{sec:quasirandom}.

Now, if $\pi$ is non-quasirandom to such an extent that multi-threaded scanning is completely ineffective, then it must have long $\Delta$-shifts for many $\Delta$, which heavily constrains the structure of $\pi$. We might hope that there are very few non-quasirandom permutations, so that the basic bound $|\mathcal{S}_n(\pi)| \le n!e^{-\Omega(n/k)}$ suffices for a union bound over all non-quasirandom $\pi$, for some $n$ much smaller than $k^2\log k$. While this approach can yield a small constant-factor improvement, the number of non-quasirandom permutations is unfortunately still too large: for example, there are $(k/2)!=e^{\Theta(k\log k)}$ permutations $\pi$ satisfying $\pi(i)=i$ for $i\le k/2$.

Instead, our approach is as follows. We define a notion of a ``structured map'' $\phi:Z\to [k]$, where $Z\subseteq [k]$, in such a way that there are only $e^{O(k\log \log k)}$ different structured maps (in contrast to the $e^{\Theta(k\log k)}$ many permutations $\pi\in \mathcal S_k$). We then prove that every permutation $\pi\in \mathcal S_k$ can be partitioned into a quasirandom part and a structured part, in the sense that there is a partition $[k]=Q\cup Z$ such that the restriction $\pi|_Z:Z\to [k]$ is a structured map, and the restriction $\pi|_Q:Q\to [k]$ is in some sense quasirandom with respect to $\pi$.

Of course, since there are few structured maps, it would be straightforward to use the union bound to prove that a random permutation $\sigma\in \mathcal S_n$ typically contains every structured map, for some $n=O(k^2\log \log k)$. But since we need to handle ``hybrid'' permutations that may have their quasirandom and structured parts arbitrarily interleaved, this approach is insufficient. Instead we show that $M$ typically has a technical property we call $\mathcal{B}$ that for any set of positions in $M$ corresponding to a copy of $\phi$, the average length of the runs of zeros starting at these positions is $O(\log \log k)$, which is much shorter than the bound $O(\log k)$ guaranteed by $\mathcal A$ for individual runs. Once we condition on $\mathcal A\cap \mathcal B$, we then encounter no problems analyzing the multi-threaded scanning algorithm, yielding a proof of \cref{thm:main}. The details are in \cref{sec:main}.

\section{\label{sec:multi-threaded-scanning}Multi-threaded scanning}

Our first lemma reduces permutation universality to a notion of matrix universality. It will be useful to define the notion of an \emph{interval minor} introduced by Fox~\cite{Fox13}, which generalizes permutation containment.

\begin{defn}
The {\it interval contraction} of a pair of consecutive rows (resp. columns) in a zero-one matrix replaces those rows (resp. columns) by their entrywise binary OR. If $P$ and $M$ are two zero-one matrices, then $P$ is an {\it interval minor} of $M$ if it can be obtained from $M$ by repeatedly performing interval contractions and replacing ones with zeros.
\end{defn}

Note that being an interval minor is transitive in the sense that if $M_1$ is an interval minor of $M_2$, and $M_2$ is an interval minor of $M_3$, then $M_1$ is an interval minor of $M_3$.

We remark that one can also interpret a sequence of interval contractions in the following alternative way. For a zero-one matrix $M$, fix an interval partition of its set of rows and an interval partition of its set of columns, thereby interpreting $M$ as a block matrix. We can then define a contracted zero-one matrix with an entry for each block, where an entry is a zero if and only if its corresponding block is an all-zero matrix. 

Write $P_\pi$ for the permutation matrix of $\pi$, and note that a permutation $\sigma$ contains a pattern $\pi$ if and only if $P_\pi$ is an interval minor of $P_\sigma$. We say that a zero-one matrix $M$ \emph{contains} a permutation $\pi\in\mathcal{S}_k$ if $P_\pi$ is an interval minor of $M$. 

\begin{lem}\label{lem:matrix-dominated}
Let $\sigma$ be a uniform random permutation in $\mathcal{S}_n$, and let $M$ be a uniform random $(2k)\times m$ zero-one matrix, where\footnote{To be fully rigorous we should assume that $n$ is divisible
by $4k$.
Such divisibility considerations will
be inconsequential throughout the paper, and we do not discuss them further.} $m \coloneqq n/(4k)$. Then we can couple $\sigma$ and $M$ in such a way that $M$ is always an interval minor of $P_\sigma$.
\end{lem}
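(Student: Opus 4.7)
The plan is to exploit the continuous representation of a uniform random permutation: sample $Y_{1}, \dots, Y_{n}$ i.i.d.\ $\Unif[0,1]$ and set $\sigma(j) = |\{j' : Y_{j'} \le Y_j\}|$, so that $\sigma$ is uniform on $\S_{n}$. Partition the columns $[n]$ into $m$ equal blocks $C_{1}, \dots, C_{m}$ of size $4k$, and partition $[0, 1]$ into $2k$ equal sub-intervals $I_{1}, \dots, I_{2k}$. Define the block-occupancy matrix $N \in \{0, 1\}^{2k \times m}$ by $N(i, c) = \mathbb{1}[\exists j \in C_{c} : Y_{j} \in I_{i}]$. The first step is to verify that $N$ is always an interval minor of $P_{\sigma}$: take the row partition $R_{i} = \{\operatorname{rank}(Y_{j}) : Y_{j} \in I_{i}\}$, which is an interval partition of $[n]$ because ranking is monotone, together with the column partition into the $C_{c}$. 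The block $(R_{i}, C_{c})$ of $P_{\sigma}$ contains a $1$ iff some $j \in C_{c}$ has $Y_{j} \in I_{i}$, i.e., iff $N(i, c) = 1$, so the OR-reduction of these blocks is exactly $N$.

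The second step is to produce $M \le N$ that is exactly i.i.d.\ $\operatorname{Ber}(1/2)$, using independent auxiliary randomness. Each entry of $N$ has marginal $1 - (1 - 1/(2k))^{4k} \ge 1 - e^{-2} > 1/2$, giving enough slack to downsample. For each column $c$ independently, given $N(\cdot, c)$, sample $M(\cdot, c)$ from the exponentially-weighted distribution on $\{m \le N(\cdot, c)\} \subseteq \{0,1\}^{2k}$ with weight $w^{|m|_{1}}$ for a suitable $w$. A short calculation shows that if the entries of $N(\cdot, c)$ were i.i.d.\ $\operatorname{Ber}(p)$, then taking $w = 1/(2p-1)$ makes the resulting $M(\cdot, c)$ exactly $\operatorname{Ber}(1/2)^{\otimes 2k}$ marginally; since $p > 1/2$ we have $w > 0$, so this is a well-defined probability distribution.

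The main obstacle is that the entries of $N(\cdot, c)$ are not actually independent: they record which of $2k$ bins are hit by $4k$ i.i.d.\ uniforms, and are negatively correlated. My proposed fix is a Poissonization argument: replace the fixed number of $Y_{j}$'s per column block with a $\operatorname{Po}(4k)$ count (equivalently, use a Poisson point process on $[0,1]^2$ of intensity $n$), so that the per-cell bin counts become genuinely i.i.d.\ $\operatorname{Po}(2)$ and the weighted sampling goes through exactly. A de-Poissonization step then transfers the construction back to the fixed-size setting (e.g., by conditioning on the total count and arguing that one can patch any discrepancy using further auxiliary randomness) while preserving both marginal distributions and the deterministic ``$M$ is an interval minor of $P_{\sigma}$'' relation. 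An alternative approach I would try in parallel is to replace the fixed partition $I_{1}, \dots, I_{2k}$ by a carefully randomized one (depending on extra independent coins), chosen so that the resulting occupancy indicators are exactly independent Bernoullis by construction, avoiding the need for Poissonization.
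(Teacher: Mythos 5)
Your first step (the block-occupancy matrix $N$, and the verification that $N$ is always an interval minor of $P_\sigma$) is exactly the construction in the paper. The divergence is in how $M$ is coupled underneath $N$, and there the proposal has a real gap.

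The Poissonization step is fine: with a Poisson process of intensity $n$ on the unit square, the cell counts are genuinely independent, $N$ becomes i.i.d.\ $\Ber(1-e^{-2})$ within a column, and the exponentially-weighted downsampling produces $M\le N$ with $M$ exactly i.i.d.\ $\Ber(1/2)$. The problem is the de-Poissonization. Conditioning on the total count being $n$ recovers a uniform $\sigma\in\mathcal S_n$, but it also changes the conditional law of $M$ away from $\Ber(1/2)^{\otimes 2k\times m}$ (the event $\{\text{total}=n\}$ is not independent of $M$, since $M$ was built from the cell counts). So after conditioning you have $\tilde M\le \tilde N$ with $\tilde N$ distributed as in the fixed-$n$ model, but $\tilde M$ is no longer uniform. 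To ``patch the discrepancy using auxiliary randomness'' you would need to replace $\tilde M$ by a fresh $\hat M\sim\Ber(1/2)^{\otimes}$ still satisfying $\hat M\le \tilde N$ almost surely. That is possible if and only if $\tilde N$ stochastically dominates $\Ber(1/2)^{\otimes}$ --- which is exactly the statement you set out to prove. The Poissonization detour has not actually discharged the dependence; it has just relocated it to a conditioning step. The randomized-partition alternative has a similar issue: the negative dependence comes from there being exactly $4k$ points in each column block, and shifting the bin boundaries by extra coins does not remove that constraint, so it is not clear it yields exactly independent indicators.

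The paper sidesteps all of this with a more elementary observation. It works directly with a single column of $N$ and shows that for every row index $y$, \emph{conditioned on any outcome of all other entries of that column}, the entry $N(y,1)$ equals $1$ with probability at least $1/2$. The reason: revealing $N(y',1)$ for each $y'\ne y$ can be done by scanning through $J_1$ until the first index with $U_i\in I_{y'}$, so the $2k-1$ scans pin down at most $2k-1$ of the $4k$ indices; the remaining $\ge 2k$ indices each still lie in $I_y$ with conditional probability $\ge 1/(2k)$, giving $\Pr(N(y,1)=0\mid\cdot)\le(1-1/(2k))^{2k}\le 1/e\le 1/2$. This per-coordinate conditional lower bound immediately yields (via the standard sequential coupling) that the column of $N$ stochastically dominates a column of i.i.d.\ $\Ber(1/2)$ entries, with no need to engineer the joint law of $N$. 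If you want to complete your proof along your own lines, you would need a genuinely new argument for the de-Poissonization; otherwise I would recommend switching to the conditional-probability argument.
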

\begin{proof}
First, we observe that a uniform random permutation $\sigma\in\mathcal{S}_{n}$
can be obtained via a sequence of $n$ i.i.d.\ $\Unif\left(0,1\right)$
random variables $U_{1},\dots,U_{n}$ (whose values are distinct with probability
1), by taking $\sigma$ to be the unique permutation for which $U_{\sigma\left(1\right)}<\dots<U_{\sigma\left(n\right)}$.

We divide the interval $\left[0,1\right]$ into $2k$ consecutive
equal-sized intervals $I_{1},\dots,I_{2k}$ (so $I_y$ is the interval  between $(y-1)/(2k)$ and ${y}/{(2k)}$), and we divide the discrete
interval $\left\{ 1,\dots,n\right\} $ into $m:=n/\left(4k\right)$
consecutive (discrete) equal-sized intervals $J_{1},\dots,J_{m}$ (so $J_x$ contains the integers from $4k(x-1)+1$ to $4kx$ inclusive). Let $M_U$ be the random $\left(2k\right)\times m$
matrix with $\left(y,x\right)$-entry\footnote{Here $x$ represents the column index (i.e.,\ the horizontal coordinate), and $y$ represents the row index (i.e.,\ the vertical coordinate). It is rather unfortunate that the accepted convention for indexing matrices is opposite to the convention for indexing points in 2-dimensional space.}
\[
M_U\left(y,x\right)=\begin{cases}
1 & \text{if there is }j\in J_{x}\text{ with }U_{j}\in I_{y},\\
0 & \text{otherwise}.
\end{cases}
\]
Observe that $M_U$ is an interval minor of $P_\sigma$. It remains to check that there is a coupling between $M_U$ and $M$ such that $M\le M_U$. Since the columns of $M_U$ are i.i.d., we just need to show that the first column of $M$ is stochastically dominated by the first
column of $M_U$. Consider any $y\in\left[2k\right]$, and condition
on any outcome of the values of $M_U\left(y',1\right)$ for $\left(y',1\right)\ne\left(y,1\right)$.
It suffices to show that, conditionally, we have $M_U\left(y,1\right)=1$
with probability at least $1/2$.

To see this, note that to reveal whether $M_U\left(y',1\right)=1$,
it suffices to run through the indices $i\in J_{1}$, and keep checking
whether $U_i \in I_{y'}$ until we first see a success. So, after
revealing this information for all $y'\ne y$, there are at least
$4k-\left(2k-1\right)\ge2k$ indices $i\in J_{1}$ for which $U_i$ could still lie in $I_y$. In fact, each such $U_i$ is conditionally uniform on some set containing $I_y$, meaning that $U_i\in I_y$ with probability at least $1/(2k)$. So, the
conditional probability of the event $M_U\left(y,1\right)=0$ is at
most $\left(1-1/\left(2k\right)\right)^{2k}\le1/e\le1/2$, as desired.
\end{proof}

We say that a matrix $M$ is \emph{$k$-universal} (with respect to permutations from $\S_k$) if it contains every $\pi \in \mathcal{S}_k$. The above lemma shows that in order to prove that a uniform random $\sigma \in \mathcal{S}_n$ is $k$-universal, it suffices to show that a random $(2k)\times m$ matrix $M$ is $k$-universal, where $m=n/(4k)$.

\begin{rem*}
The matrix $M$ is typically ``dense'' (about half of its entries are ones), and one may wonder whether this density alone is enough to ensure that $M$ is $k$-universal
(provided $m\ge2k$, say). Although this suffices for the containment
of certain permutations such as the identity $1_k\in\mathcal{S}_{k}$, Fox~\cite[Theorem~6]{Fox13}
established the existence of a matrix, almost all of whose entries are ones, which fails to contain
almost all $\pi\in\mathcal{S}_{k}$. Thus, the density of ones alone is not enough to guarantee the $k$-universality of $M$.
\end{rem*}

Fix $\pi \in \mathcal{S}_k$. We now describe a procedure for finding
a copy of $\pi$ in a random $M$. We will use this procedure in the proofs
of \cref{thm:main,thm:almost-all}.

For each $t\in [k]$, we attempt to find a copy of $\pi$ in rows\footnote{It would be more natural to also consider $t=0$, since otherwise we actually never touch the first row of the matrix. However considering only $t\in [k]$ makes the indexing slightly more convenient.} $t+1,\dots,t+k$ in the following greedy fashion. First scan through row $\pi\left(1\right)+t$
from left to right until a one is found in some position $(\pi\left(1\right)+t,x_1)$, then scan through row $\pi\left(2\right)+t$,
starting from column $x_{1}+1$, until a one is found in some position $(\pi\left(2\right)+t,x_2)$, and so on (see \cref{fig:thread-scanning} below). We call this
procedure ``scanning along thread $t$ to find $\pi$''. Note that thread $t$ successfully finds $\pi$ if and only if some copy of $\pi$ lies in rows $t+1,\dots,t+k$, and it exposes at most $m$ entries of $M$ since it checks at most one entry in each column.

\begin{figure}[h]
\centering
\[
\begin{pmatrix}
* & * & * & * & * & * & * & * & * & * & * & * \\
* & * & * & * & * & * & * & * & * & * & * & * \\
0 & 1 & * & * & * & * & * & * & * & * & * & * \\
* & * & * & * & * & * & * & * & 0 & 0 & 1 & * \\
* & * & 0 & 0 & 0 & 0 & 0 & 1 & * & * & * & * \\
* & * & * & * & * & * & * & * & * & * & * & *
\end{pmatrix}
\]
\caption{One possible outcome for thread $2$ successfully finding a copy of $\pi = 132$ in rows $\{3,4,5\}$ of a $6 \times 12$ random matrix $M$. Starred entries remain unexposed.}
\label{fig:thread-scanning}
\end{figure}

\begin{rem*}
Since submitting the first version of this paper, it was brought to our attention that the same multi-threaded scanning procedure was recently also considered by Cibulka and Kyn\v{c}l~\cite{cibulka}, in a different context.
\end{rem*}

We conclude this section with a simple lemma that will be useful for analyzing our scanning process. Given coordinates $\left(y,x\right)\in\left[2k\right]\times\left[m\right]$,
let 
\[
r\left(y,x\right)=\max\left\{ t\in\left[m\right]:M\left(y,x+s\right)=0\text{ for }0\le s\le t\right\} 
\]
be the length of the run of zeros in $M$ starting at $\left(y,x\right)$
and continuing left-to-right (so $r\left(y,x\right)=0$ if $M\left(y,x\right)=1$).
Note that $r\left(y,x\right)$ has a geometric distribution supported
on the nonnegative integers, with success probability $1/2$. The following lemma shows that long runs of zeros
are unlikely, in a fairly general sense.

\begin{lem}
\label{lem:run-tail}There is an absolute constant $c>0$ such that
the following holds for sufficiently large $k$. If $\left(y_{1},x_{1}\right),\dots,\left(y_{\ell},x_{\ell}\right)$
are $\ell\le k$ positions in $M$, in distinct rows, then for $r\ge4\ell$
we have 
\[
\Pr\left(r\left(y_{1},x_{1}\right)+\dots+r\left(y_{\ell},x_{\ell}\right)\ge r\right) < e^{-r/8}.
\]
\end{lem}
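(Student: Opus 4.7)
The key observation I would start from is that, because the positions $(y_1,x_1),\dots,(y_\ell,x_\ell)$ lie in \emph{distinct} rows, the run lengths $r(y_i,x_i)$ are determined by entries in pairwise disjoint row-segments of $M$. Since all entries of $M$ are independent, this makes $r(y_1,x_1),\dots,r(y_\ell,x_\ell)$ mutually independent. Each individual $r(y_i,x_i)$ is stochastically dominated by a $\Geom(1/2)$ random variable supported on the nonnegative integers --- exact equality would hold if row $y_i$ extended infinitely to the right, and truncation at column $m$ can only shorten runs.

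Granted this, the sum $S := \sum_{i=1}^\ell r(y_i,x_i)$ is stochastically dominated by a sum of $\ell$ i.i.d.\ $\Geom(1/2)$ variables $G_1,\dots,G_\ell$, and I would apply a standard Chernoff argument via the moment generating function. For $t<\ln 2$ one computes $\E[e^{tG_i}]=(2-e^t)^{-1}$, so by independence
$$
\E[e^{tS}]\;\le\;\prod_{i=1}^\ell \E[e^{tG_i}]\;=\;(2-e^t)^{-\ell},
$$
and Markov's inequality yields $\Pr(S\ge r)\le e^{-tr}(2-e^t)^{-\ell}$. The convenient choice $e^t=3/2$ (so $2-e^t=1/2$) gives $\Pr(S\ge r)\le 2^\ell(2/3)^r$. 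Plugging in the hypothesis $\ell\le r/4$ gives
$$
\Pr(S\ge r)\;\le\;2^{r/4}(2/3)^r\;=\;\left(\tfrac{2\cdot 2^{1/4}}{3}\right)^{r},
$$
and a short numerical check shows $2\cdot 2^{1/4}/3<e^{-1/8}$, delivering the claimed bound.

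I do not expect any serious obstacle: the content of the lemma is essentially a Chernoff tail bound for a sum of independent geometric random variables, and the constants $4\ell$ and $r/8$ are loose enough that the choice $e^t=3/2$ works with comfortable slack. The one point that genuinely uses a hypothesis of the lemma is the independence assertion, which is exactly why the statement insists the rows be distinct --- runs starting in the same row could be entangled (e.g.\ through a shared $1$-entry lying between their starting positions), and the MGF factorization would no longer hold.
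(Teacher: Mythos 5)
Your proof is correct and takes essentially the same route as the paper: independence of the run lengths because the rows are distinct, each run length (dominated by a) geometric variable, and a Chernoff-type tail bound for the resulting sum. The only difference is that you carry out the moment-generating-function calculation explicitly (with the valid choice $e^{t}=3/2$ and the numerical check $2\cdot2^{1/4}/3<e^{-1/8}$), whereas the paper simply cites a standard concentration inequality for the negative binomial distribution.
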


\begin{proof}
As the $\ell$ positions $(y_i, x_i)$ all lie in distinct rows, the run lengths $r(y_1, x_1),\ldots, r(y_\ell, x_\ell)$ are independent random variables. As observed above, they are individually geometric random variables. Thus the the sum $r\left(y_{1},x_{1}\right)+\dots+r\left(y_{\ell},x_{\ell}\right)$ has a negative binomial distribution, and the desired inequality follows directly from a concentration inequality for the negative
binomial distribution. See \cite[Problem~2.5]{DP09}.
\end{proof}

\section{\label{sec:quasirandom}Quasirandom permutations}

In this section we prove \cref{thm:almost-all}. The ideas will also
be relevant for \cref{thm:main}. 

For $\pi\in\mathcal{S}_{k}$, we
say that a subset $A\subseteq[k]$ is a \emph{$\Delta$-shift} of another subset $B\subseteq[k]$
in $\pi$ if, writing $a_{1}<\dots<a_{L}$ for the elements of $A$
and $b_{1}<\dots<b_{L}$ for the elements of $B$, we have $\pi\left(a_{i}\right)=\pi\left(b_{i}\right)+\Delta$
for each $i$. For $\pi\in\mathcal{S}_{k}$, let $L_{\Delta}\left(\pi\right)$
be the largest $L$ for which there are $L$-sets $A,B\subseteq[k]$
such that $A$ is a $\Delta$-shift of $B$. Equivalently, $L_{\Delta}(\pi)$
is the length of the longest increasing subsequence of the function
$i\mapsto\pi^{-1}(\pi(i)+\Delta)$, where $i$ ranges over all indices
for which $\pi(i)\le k-\Delta$.

\begin{figure}[h]
\centering
\[
\begin{pmatrix}
* & * & * & * & * & * & * & * & * & * & * & * \\
* & * & * & * & * & * & * & * & * & * & * & * \\
0 & 1 & * & * & * & * & * & * & * & * & * & * \\
{\bf 0} & {\bf 0} & {\bf 0} & {\bf 0} & {\bf 1} & * & * & * & * & 0 & 0 & 0 \\
* & * & 0 & 0 & 0 & 0 & {\bf 0} & {\bf 0} & {\bf 1} & * & * & * \\
* & * & * & * & * & {\bf 1} & * & * & * & * & * & *

\end{pmatrix}
\]
\caption{One possible outcome for threads $2$ and $3$ searching for a copy of $\pi = 132$ in a $6 \times 12$ random matrix $M$, where thread $2$ fails but thread $3$ succeeds. Bolded entries are those checked by thread $3$. Note that because $L_1(132)=1$, the two threads can intersect in at most one row (in this case row $5$).}
\label{fig:L-Delta}
\end{figure}

The purpose of this definition is that the values $L_{\Delta}\left(\pi\right)$
measure the extent to which threads $t$ and $t+\Delta$ can intersect
each other (see Figure~\ref{fig:L-Delta}), in the multi-threaded scanning procedure described in
\cref{sec:multi-threaded-scanning}. The following lemma shows that
for almost all $\pi\in\mathcal{S}_{k}$, each of the values $L_{\Delta}\left(\pi\right)$
is quite small.
\begin{lem}
\label{lem:random-L}W.h.p, a uniform random permutation $\pi\in\mathcal{S}_{k}$
satisfies $L_{\Delta}\left(\pi\right)\le3\sqrt{k}$ for all $\Delta\in[k]$.
\end{lem}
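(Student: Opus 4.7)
The plan is to use Markov's inequality on the count of $\Delta$-shifts of size $L = \lceil 3\sqrt{k}\rceil$, combined with a union bound over $\Delta \in [k]$. For each fixed $\Delta$, let $X_\Delta$ count the pairs $(A, B)$ of $L$-subsets of $[k]$ such that $A$ is a $\Delta$-shift of $B$. By Markov, $\Pr(L_\Delta(\pi) \ge L) \le \E[X_\Delta]$, so it suffices to show $\E[X_\Delta] = e^{-\Omega(\sqrt k)}$ uniformly in $\Delta$.

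I would bound $\E[X_\Delta]$ by partitioning pairs $(A, B)$ by the overlap $s = |A \cap B|$. Given $(A, B)$, form the directed graph on $A \cup B$ with $L$ edges $b_j \to a_j$ (where $b_j, a_j$ are the $j$-th smallest elements of $B$ and $A$). Each vertex has in- and out-degree at most $1$, and a directed cycle of length $t$ would force $t\Delta = 0$ by telescoping the defining relations $\pi(a_j) = \pi(b_j) + \Delta$, which is impossible for $\Delta \ge 1$. So the graph is a disjoint union of $r = L - s$ directed paths. On each such path $p_1 \to \dots \to p_{c+1}$, the $\Delta$-shift relations force $\pi(p_1), \dots, \pi(p_{c+1})$ to form an arithmetic progression with common difference $\Delta$, leaving just one free value (the starting term) per path. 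Hence
\[
\Pr(A \text{ is a } \Delta\text{-shift of } B) \le \frac{k^r (k - L - r)!}{k!},
\]
and the number of pairs with overlap $s$ equals $\binom{k}{s}\binom{k - s}{L - s}\binom{k - L}{L - s}$. The contribution of overlap $s$ to $\E[X_\Delta]$ simplifies to $\frac{k^r}{(L - r)!(r!)^2}$, after which, reindexing $j = L - r$ and bounding $(L!/(L - j)!)^2 \le L^{2j}$, one obtains
\[
\E[X_\Delta] \le \sum_{r=1}^L \frac{k^r}{(L - r)!(r!)^2} \le \frac{k^L}{(L!)^2}\sum_{j=0}^{L-1}\frac{(L^2/k)^j}{j!} \le \frac{k^L}{(L!)^2}\, e^{L^2/k}.
\]

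To finish, use $L! \ge (L/e)^L$ to get $k^L/(L!)^2 \le (e^2 k/L^2)^L$. For $L = 3\sqrt{k}$ we have $L^2/k = 9$ (so $e^{L^2/k}$ is an absolute constant) and $(e^2 k/L^2)^L = (e^2/9)^{3\sqrt k} = e^{-\Omega(\sqrt k)}$ since $e^2/9 < 1$. Thus $\E[X_\Delta] = e^{-\Omega(\sqrt k)}$, and union-bounding over the at most $k$ choices of $\Delta$ yields $\Pr\!\left(\exists \Delta \in [k]:\ L_\Delta(\pi) > 3\sqrt k\right) \le k \cdot e^{-\Omega(\sqrt k)} = o(1)$.

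The main obstacle to watch out for is the overlap regime $r < L$: a careless bound such as $\Pr \le 1/L!$ combined with the trivial pair count $\binom{k}{L}^2$ gives a sum that diverges when $L = \Theta(\sqrt{k})$. The key feature of the path-decomposition argument is that more overlap (smaller $r$) tightens the AP constraints enough to pin down $L + r$ values of $\pi$ using only $r$ free parameters, producing an extra factor of $k^{r - L}$ in the probability that cancels the combinatorial growth in the pair count. In the final bound this cancellation shows up as the harmless constant factor $e^{L^2/k} = e^9$, so the disjoint term $r = L$ dominates up to a constant.
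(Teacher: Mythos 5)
Your proof is correct, and it rests on the same structural fact that drives the paper's argument: the graph $G(A,B)$ on $A\cup B$ with edges $a_ib_i$ is a disjoint union of directed paths, and the relations $\pi(a_i)=\pi(b_i)+\Delta$ leave only one free value of $\pi$ per path. The execution differs, though. The paper takes a union bound over the sets $B$ alone, observing that the only candidate shift is $A=\pi^{-1}(\pi(B)+\Delta)$, and then conditions on $A$ and on $\pi|_{B\setminus A}$ to conclude that $\pi|_A$ is correct with probability exactly $1/L!$ (since every path of $G(A,B)$ has an endpoint in $B\setminus A$); this sidesteps any bookkeeping over $|A\cap B|$ and immediately gives $k\binom{k}{L}/L!$. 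You instead run a first-moment computation over ordered pairs $(A,B)$, stratified by the overlap $s=|A\cap B|$, and you verify by hand that the gain of $k^{r-L}$ in the containment probability (from having only $r=L-s$ paths) cancels the combinatorial growth in the number of overlapping pairs, leaving the harmless factor $e^{L^2/k}=e^9$. Your route is more elementary (no conditioning argument, just counting injections) at the cost of the stratification; both give $k\,(e^2k/L^2)^L$ up to a constant, which is $o(1)$ at $L=3\sqrt{k}$ precisely because $e^2<9$. One small point worth making explicit, which you do handle correctly: a directed cycle in $G(A,B)$ would force $t\Delta=0$, so such pairs contribute zero and the component count $r=|V|-|E|=L-s$ is valid for all contributing pairs.
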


To prove \cref{lem:random-L}, it will be convenient to make a few
definitions. For a function $f$ and a subset $A$ of its domain,
we use the notation $f|_{A}$ for the restriction of $f$ to $A$. Also, for subsets $A,B\subseteq\left[k\right]$ with elements $a_{1}<\dots<a_{L}$
and $b_{1}<\dots<b_{L}$, let $G\left(A,B\right)$ be the graph on
the vertex set $A\cup B$ with edge set $\{a_ib_i\}_{i=1}^L$. In particular, if $A\cap B = \emptyset$ then $G(A,B)$ is a matching, and in general $G(A,B)$ is always a vertex-disjoint union of paths. Whenever we use this definition we we will have $a_i\ne b_i$ for all $i$, so that $G(A,B)$ has no loops.
\begin{proof}[Proof of \cref{lem:random-L}]
For a fixed subset $B\subseteq[k]$ of order $L$, we bound the
probability that there exists a $\Delta$-shift of $B$ in $\pi$.
Such a shift may only exist if $\pi(B)\subseteq [k-\Delta]$, so we can assume
this. Now, if a shift of $B$ exists, it must be the set $A:=\pi^{-1}\left(\pi\left(B\right)+\Delta\right)$,
consisting of all indices $a$ such that $\pi\left(a\right)=\pi\left(b\right)+\Delta$
for some $b\in B$. Condition on any outcome of the random set $A$,
and on an outcome of $\pi|_{B\setminus A}:B\setminus A\to\left[k\right]$.

Now, conditionally, $\pi|_{A}$ is uniformly random among the $L!$ bijections from
$A$ into $\pi\left(B\right)+\Delta$. But, since we have conditioned
on an outcome of the function $\pi|_{B\setminus A}$, observe that there is only
one possibility for $\pi|_{A}$ that results in $A$ being a $\Delta$-shift
of $B$ in $\pi$. Indeed, note that the graph $G\left(A,B\right)$
is a disjoint union of paths, and that if $A$ is a $\Delta$-shift
of $B$ then $\pi|_{A\cup B}$ is fully determined by specifying the
value of $\pi\left(b\right)$ for a representative $b$ from each
component of $G\left(A,B\right)$. Since each path of $G\left(A,B\right)$
has an endpoint in $B\setminus A$, specifying $\pi|_{B\setminus A}$
determines $\pi|_{B}$.

It follows that
\[
\Pr\left(\max_{\Delta}L_{\Delta}\left(\pi\right)\ge L\right)\le k\binom{k}{L}\frac{1}{L!}\le k\Big(\frac{e^{2}k}{L^{2}}\Big)^{L},
\]
since there are $k$ choices of $\Delta$ and $\binom{k}{L}$ choices
for $B$. This probability is $o\left(1\right)$ for $L=3\sqrt{k}$,
which completes the proof.
\end{proof}
Now, let $\mathcal{\mathcal{Q}}_{k}\subseteq\mathcal{S}_{k}$ be the
set of $\pi\in\mathcal{S}_{k}$ such that $L_{\Delta}\left(\pi\right)\le3\sqrt{k}$
for each $\Delta\in\left[k\right]$. By \cref{lem:random-L}, we have
$|\mathcal{Q}_{k}|=(1-o(1))k!$. We are ready to prove \cref{thm:almost-all}.
\begin{proof}[Proof of \cref{thm:almost-all}]
Let $m=5k$, and consider a uniform random $\left(2k\right)\times m$
zero-one matrix $M$, whose entries are independently zero or one,
each with probability $1/2$. We will show that w.h.p.\ $M$ contains
every pattern $\pi\in\mathcal{Q}_{k}$. By \cref{lem:matrix-dominated}, the desired result follows:
w.h.p.\ a uniform random permutation of length $n=20k^{2}$ contains
every pattern in $\mathcal{Q}_{k}$.

Now, recall that $r\left(y,x\right)$ is the length of the longest
run of zeros in $M$ starting at $\left(y,x\right)$. Applying
\cref{lem:run-tail} with $\ell=1$, for each $\left(y,x\right)$ we
have $\Pr\left(r\left(y,x\right)>\log^{2}k\right)=o\left(1/(km)\right)$. Let $\mathcal A$ be the event that $r\left(y,x\right)\le\log^{2}k$ for all $(y,x)\in [2k]\times[m]$, so $\mathcal A$ holds w.h.p., by the union bound.

We wish to run $\log^{2}k$ threads of the scanning procedure
described in \cref{sec:multi-threaded-scanning}, for each $\pi\in\mathcal{Q}_{k}$.
However, we make a small modification to the procedure: if, during
some thread, we scan along a row for $\log^{2}k$ steps, finding only
zeros, then we pretend that the last of the entries checked was actually
a one (and continue scanning through some other row to find more of $\pi$).
We say that the thread \emph{succeeds} if it thinks it found a copy
of $\pi$ under these pretensions, and otherwise we say it \emph{fails}.
The plan is to show that for each $\pi\in\mathcal{Q}_{k}$, the probability
that all of our $\log^{2}k$ threads fail is only $e^{-\Omega(k\log^{2}k)}=o\left(1/\left|\mathcal{Q}_{k}\right|\right)$,
and then observe that 
\begin{align*}
\Pr\left(M\text{ contains every pattern in }\mathcal{Q}_{k}\right) & \ge\Pr\left(\text{\ensuremath{\mathcal{A}}}\cap\left\{ \text{for each }\pi\in\mathcal{Q}_{k}\text{, some thread succeeds}\right\} \right)\\
 & \ge\Pr\left(\mathcal{A}\right)-\sum_{\pi\in\mathcal{Q}_{k}}\Pr\left(\text{each thread fails for }\pi\right)=1-o\left(1\right).
\end{align*}
So, it suffices to fix $\pi\in\mathcal{Q}_{k}$ and show that with
probability $1-o\left(1/\left|\mathcal{Q}_{k}\right|\right)$ at least
one of the $\log^{2}k$ threads succeeds. Let $T_{t}\subseteq\left[2k\right]\times\left[5k\right]$
be the set of entries exposed by thread $t$. For $\pi\in\mathcal{Q}_{k}$
and $t\le\log^{2}k$, let $\mathcal{E}_{t}$ be the event that thread $t$ fails (in which case $|T_{t}|=5k$).

Now, observe that the intersections of threads always satisfy $|T_{t}\cap T_{t'}|\le3\sqrt{k}\log^{2}k$.
Indeed, if $t<t'$ and $X\subseteq\left[2k\right]$ is the set of
rows on which $T_{t}$ and $T_{t'}$ intersect, then $\pi^{-1}(X-t)$
is a $(t'-t)$-shift of $\pi^{-1}(X-t')$ in $\pi$, so $T_{t}\cap T_{t'}$
can intersect in at most $L_{t'-t}(\pi)\le3\sqrt{k}$ rows, each of
which can contain at most $\log^{2}k$ entries of $T_{t}\cap T_{t'}$.

It follows that, if $\mathcal{E}_{t}$ occurs, then 
\[
\left|T_{t}\setminus\left(T_{1}\cup\dots\cup T_{t-1}\right)\right|\ge5k-(t-1)3\sqrt{k}\log^{2}k\ge4k
\]
for sufficiently large $k$. That is to say, if thread $t$
fails, then it runs through at least $4k$ entries that were not exposed
by previous threads. Also, if a thread ever finds $k$ ones then it succeeds. So, the
probability that thread $t$ fails, conditioned on any outcome
of the previous threads, is upper-bounded by the probability that
a sequence of $4k$ coin flips results in fewer than $k$ heads, which
is at most $e^{-k/2}$ by a Chernoff bound (see for example \cite[Theorem~1.1]{DP09}).
That is to say,
\[
\Pr\left(\mathcal{E}_{t}\cond\mathcal{E}_{1}\cap\dots\cap\mathcal{E}_{t-1}\right)\le e^{-k/2},
\]
which implies that
\[
\Pr\left(\mathcal{E}_{1}\cap\dots\cap\mathcal{E}_{\log^{2}k}\right)\le e^{-\left(k/2\right)\log^{2}k}=o\left(1/\left|\mathcal{Q}_{k}\right|\right),
\]
as desired.
\end{proof}

\section{Structure vs quasirandomness}\label{sec:main}

In this section we prove \cref{thm:main}. We first define a notion of quasirandomness, in a similar spirit to the definition of $\mathcal Q_k$ in the previous section.
\begin{defn}
\label{def:quasirandomness}For $\pi\in\mathcal{S}_{k}$ and a subset
$X\subseteq\left[k\right]$, let $L_{\Delta}\left(\pi,X\right)$ be
the largest $L$ such that there are $L$-sets $A\subseteq X$ and $B\subseteq\left[k\right]$ with $A$ being a $\Delta$-shift of $B$. Say that
$X$ is \emph{$\left(\alpha,q\right)$-quasirandom in $\pi$} if $L_{\Delta}\left(\pi,X\right)\ge\alpha k$
for fewer than $q$ values of $\Delta\in[k]$.
\end{defn}

This quasirandomness condition is designed to ensure that if we choose
a sequence of threads $t_{1},\dots,t_{\ell}$ in such a way that the
pairwise differences $t_{j}-t_{i}$, for $i<j$, are not among the
small number of ``exceptional'' values of $\Delta$, then the quasirandom part of any thread cannot intersect very much with the other threads.

Next, for a set $X\subseteq\left[k\right]$, let $\mathcal{S}_{X,k}$
be the set of injections $\pi:X\to\left[k\right]$, and write $\S_{k}^{*}=\bigcup_{X\subseteq\left[k\right]}\S_{X,k}$.
Our main lemma shows that there is a relatively small family $\mathcal{Z}_{k}\subseteq\S_{k}^{*}$
of ``structured'' maps, having the property that every permutation
$\pi$ can be decomposed into a quasirandom part and a structured
part. Here and in the rest of the section, we assume that $k$ is sufficiently large.
\begin{lem}
\label{lem:structure-vs-randomness}There exists a family $\mathcal{Z}_{k}\subseteq\S_{k}^{*}$
such that:
\end{lem}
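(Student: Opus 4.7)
The plan is to construct $\mathcal{Z}_k$ as the image of an iterative peeling procedure. Given $\pi \in \mathcal{S}_k$, initialize $Z_0 := \emptyset$ and $Q_0 := [k]$. At step $i \ge 1$, if $Q_{i-1}$ fails to be $(\alpha, q)$-quasirandom in $\pi$ (for parameters $\alpha, q$ to be chosen, roughly $\alpha \asymp 1/\log k$ and $q$ polylogarithmic), then by definition there exists $\Delta_i \in [k]$ with $L_{\Delta_i}(\pi, Q_{i-1}) \ge \alpha k$; choose the smallest such $\Delta_i$, pick witnessing $L$-sets $A_i \subseteq Q_{i-1}$ and $B_i \subseteq [k]$, and set $Z_i := Z_{i-1} \cup A_i$, $Q_i := Q_{i-1} \setminus A_i$. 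When the procedure halts after $s$ steps, let $Z := Z_s$ and $Q := Q_s$, and declare $\pi|_Z$ to be the structured part of $\pi$. Define $\mathcal{Z}_k$ as the collection of all outputs $\pi|_Z$ produced this way as $\pi$ ranges over $\mathcal{S}_k$. The decomposition property of the lemma then holds tautologically: $[k] = Z \sqcup Q$, $\pi|_Z \in \mathcal{Z}_k$, and $Q$ is $(\alpha, q)$-quasirandom in $\pi$.

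The real content is the cardinality bound $|\mathcal{Z}_k| \le e^{O(k \log \log k)}$. To get this, I would encode each output economically via the log $\{(\Delta_i, A_i, B_i)\}_{i=1}^s$ of choices made by the algorithm. Since each iteration moves at least $\alpha k$ elements into $Z$, we have $s \le 1/\alpha$. Once the log is fixed, the shift constraints $\pi(a_j) = \pi(b_j) + \Delta_i$ propagate values along the paths of each $G(A_i, B_i)$, and stacking these iteratively means that $\pi|_Z$ is determined on each component of the combined constraint graph $G := \bigcup_i G(A_i, B_i)$ by its value at a single ``root'' vertex of that component. Thus a structured map can be recovered from (a) the log, and (b) the $\pi$-values at a small set of roots. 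With $\alpha$ on the order of $1/\log k$, specifying each $\Delta_i$ costs $\log k$ bits, each $(A_i, B_i)$ costs $O(k)$ bits, and the root values cost $O(\log k)$ bits each; if the number of roots can be shown to be $O(k / \log k)$, the total count multiplies out to $e^{O(k \log \log k)}$ as desired.

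The main obstacle is the bookkeeping needed to bound the number of roots of the constraint graph $G$ in the worst case, since shifts from different iterations can interleave to produce components of widely varying sizes. The heart of the argument will be showing that the algorithm's greedy choice of long shifts at each step forces the constraint graph to have large typical components, so that few roots suffice. A secondary delicate point is balancing the parameters $\alpha$ and $q$: they must be small enough that the log has $e^{O(k \log \log k)}$ configurations, but large enough that the resulting quasirandomness of $Q$ is strong enough to drive the multi-threaded scanning argument in the proof of \cref{thm:main} (where we need disjointness of roughly $\log \log k$ threads on the quasirandom part). I also expect that one may need to be slightly cleverer in selecting $A_i, B_i$ at each step — for example, preferring shifts that intersect earlier ones — to guarantee the component-size bound cleanly.
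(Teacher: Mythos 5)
The high-level plan (iteratively peel off structured pieces to build $\mathcal{Z}_k$, encode via shift constraints and root values, then count configurations) is indeed the paper's plan, and you correctly identify the connected-component bound as the crux. But as written there are two genuine gaps, one of which you flag and one of which you don't.

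The gap you flag --- bounding the number of roots of the constraint graph --- is not a bookkeeping detail but the main idea of the lemma, and the greedy ``prefer intersecting shifts'' heuristic you suggest does not obviously do it. The paper resolves this with a separate averaging argument (\cref{lem:weak-structure-vs-randomness}): if $\phi$ is not $(\alpha,q)$-quasirandom, one looks at the union graph $G$ of $q$ witnessing shifts, notes that $G$ has max degree at most $2q$ but $\alpha k q$ edges, and concludes that at least $\alpha k/2$ vertices have degree at least $\alpha kq/|X|$; any component containing such a vertex has at least that many vertices, so the top $|X|/(2q) \le k/q$ components already cover $\ge \alpha k/2$ vertices. One peels off exactly this union of large components, which is what gives a shift-system with few components. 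Furthermore, to iterate one must distinguish between a set $Q_{i-1}$ being quasirandom \emph{in $\pi$} (with $B \subseteq [k]$ unrestricted) and $\pi|_{Q_{i-1}}$ being \emph{itself} quasirandom (with $B \subseteq Q_{i-1}$); these diverge, and the paper handles them with two separate cases in the iteration. Your sketch conflates them, so your stopping condition and your invocation of the peeling lemma are not speaking about the same property.

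The gap you don't flag is in the counting itself. You propose encoding the full log $\{(\Delta_i, A_i, B_i)\}_{i=1}^s$, with each $(A_i, B_i)$ costing $O(k)$ bits and $s \asymp 1/\alpha \asymp \log k$ iterations, for a total of $O(k\log k)$ bits and hence $e^{O(k\log k)}$ structured maps --- which is no better than $k!$ and defeats the purpose. The paper avoids this by \emph{not} recording the full sets $A_i, B_i$: it records only a spanning forest of the constraint graph $G$, whose total number of edges across all $q$ shift-systems is at most $|X|-1 \le k$, not $O(k)$ per iteration. The cost of specifying a forest with at most $k$ edges distributed among $q$ graphs $G(A_i', B_i')$ is then bounded via Jensen's inequality applied to $z \mapsto 2z\log(ek/z)$, yielding $e^{O(k\log q)} = e^{O(k\log\log k)}$. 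Without this replacement of the full log by a spanning forest, the cardinality bound fails.
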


\begin{enumerate}
\item [(1)]$|\mathcal{Z}_{k}|\le e^{21k\log\log k}$, and
\item [(2)]for every $\pi\in\S_{k}$, there is a partition $Q\cup Z=\left[k\right]$
such that $Q$ is $(\log^{-4}k,\log^{5}k)$-quasirandom in $\pi$,
and $\pi|_{Z}\in\mathcal{Z}_{k}$.
\end{enumerate}

In the proof of \cref{lem:structure-vs-randomness} we will give a
precise definition of $\mathcal{Z}_{k}$, but the details of this
definition are not important for the proof of \cref{thm:main}. Thus, before
proving \cref{lem:structure-vs-randomness} we deduce \cref{thm:main}
from it. As outlined in \cref{sec:outline}, the plan is to first use \cref{lem:run-tail} to show that our random matrix $M$ is likely to have a property that makes multi-threaded scanning especially efficient on the structured part of a permutation $\pi$. We then proceed in a similar way to the proof of \cref{thm:almost-all}.

\begin{proof}[Proof of \cref{thm:main}]
For readability, let $C=21$, so $|\mathcal{Z}_{k}|\le e^{Ck\log\log k}$.
Let $M$ be a uniform random $(2k)\times m$ zero-one matrix,
for $m=17Ck\log\log k$. By \cref{lem:matrix-dominated} it suffices
to show that w.h.p.\ $M$ is $k$-universal (note that $4km\le 2000k^2\log \log k$). We now define two events that will be helpful in controlling the behavior of the multi-threaded scanning procedure described in \cref{sec:multi-threaded-scanning}.

Let $\mathcal{A}$ be the
event that $r\left(y,x\right)<\log^{2}k$ for every $\left(y,x\right)\in[m]\times[2k]$,
and observe that by \cref{lem:run-tail}, $\Pr(r(y,x) \ge \log^2 k) < e^{-\Omega(\log^2 k)}$. Thus, $\Pr\left(\mathcal{A}\right)=1-o\left(1\right)$. Recall that if $\mathcal A$ holds, then in our multi-threaded scanning procedure, no thread intersects any row in too many entries.

We also define a similar event which controls the amount of time a thread spends on structured maps $\phi\in \mathcal Z_k$. Let $\mathcal{B}$
be the event that for every $\phi \in \mathcal{Z}_k$, every $1\le x_{1}<\dots<x_{\ell}\le m$, every
$1\le y_{1}<\dots<y_{\ell}\le2k$ and every $t\in\left[k\right]$
we have $\sum_{i=1}^{\ell}r\left(x_{i},y_{\phi\left(i\right)}+t\right) < 16Ck\log\log k$.
By \cref{lem:run-tail}, for any individual choice of $\phi\in\mathcal{Z}_{k}$,
$t\in\left[k\right]$ and $x_{1},y_{1},\dots,x_{\ell},y_{\ell}$ we
have
\[
\Pr\left(\sum_{i=1}^{\ell}r\left(y_{i},x_{i}\right)\ge16Ck\log\log k\right) < e^{-2Ck\log\log k},
\]
so by the union bound over all choices of $\phi$, $t$, and $x_{1},y_{1},\dots,x_{\ell},y_{\ell}$, we have
\[
\Pr\left(\mathcal{B}\right)\ge1-\left|\mathcal{Z}_{k}\right|\cdot k \cdot \binom{2k}{k}\cdot\binom{m}{k}\cdot e^{-2Ck\log\log k}=1-o\left(1\right).
\]
Here we used the estimate $\binom{m}{k} \le (em/k)^k = e^{O(k\log\log\log k)}$. 

Our proof now follows the multi-threaded scanning procedure described
in \cref{sec:multi-threaded-scanning}, using a total of $\log^2 k$
threads as before. We make a similar modification as we did in \cref{sec:quasirandom}, where we pretend that $\mathcal{A}\cap\mathcal{B}$ holds: if, when scanning along a row, knowing that $\mathcal{A}\cap\mathcal{B}$
holds would allow us to deduce that the current entry is a one, then we pretend
that this next entry is in fact a one and move on to a different row
to find the next element of $\pi$. As long as $\mathcal{A}\cap\mathcal{B}$ holds,
this agrees with reality.

Fix a particular $\pi\in\S_{k}$ and let $Q\cup Z=\left[k\right]$
be the decomposition of $\pi$ given by \cref{lem:structure-vs-randomness}.
Let $F$ be the set of $\Delta\in[k]$ for which $L_{\Delta}(\pi,Q)\ge k\log^{-4}k$,
so by quasirandomness $|F|<\log^{5}k<k\log^{-2}k$. It is therefore possible to choose threads $t_{1}<\dots<t_{\log^{2}k}$ so that
no difference $t_{i}-t_{j}$ lies in $F$.

For $i\le\log^{2}k$, let $\mathcal{E}_{i}$ be the event that thread
$t_{i}$ fails to find a copy of $\pi$, under our pretensions. It suffices
to show that $\Pr(\mathcal{E}_{1}\cap\dots\cap\mathcal{E}_{\log^2 k})=o\left(1/k!\right)$.

Let $T_{i}$ be the set of entries
checked by thread $t_{i}$, and observe that if $\mathcal{E}_{i}$
occurs then $\left|T_{i}\right|=m$. Divide $T_{i}$ into subsets
$T_{i}^{Q}$ and $T_{i}^{Z}$ corresponding to the entries which are
checked to find $\pi|_{Q}$ and $\pi|_{Z}$ respectively. Now, crucially,
we always have $|T_{i}^{Z}|\le16Ck\log\log k$, because we are pretending that $\mathcal{B}$ holds: for any set of positions to place
the structured map $\pi|_Z$, the total length of the runs starting at those positions
is at most $16Ck\log\log k$.

Also, for any $1\le j < i \le\log^{2}k$, we have $L_{t_{i}-t_{j}}(\pi,Q)<k\log^{-4}k$
by quasirandomness and the choice of the $t_{i}$, so $T_{i}^{Q}$
and $T_{j}$ must intersect in fewer than $k\log^{-4}k$ distinct rows.
Since we are pretending that $\mathcal{A}$ holds, $T_{i}^{Q}\subseteq T_{i}$
contains at most $\log^{2}k$ entries in any given row, so we always
have $|T_{i}^{Q}\cap T_{j}|\le k\log^{-2}k$. Hence, if $\mathcal{E}_{i}$ occurs then
\[
\left|T_{i}\setminus\left(T_{1}\cup\dots\cup T_{i-1}\right)\right|\ge17Ck\log\log k-16Ck\log\log k-(i-1)k\log^{-2}k\ge4k.
\]
As in the proof of \cref{thm:almost-all}, if thread $i$
fails, then it runs through at least $4k$ entries that were not exposed
by previous threads, and at most $k-1$ of these entries have a one
in them. So, conditioned
on any outcome of the entries revealed by the previous threads, the probability of $\mathcal{E}_i$ is at most $e^{-k/2}$ by
a Chernoff bound, and we deduce
\[
\Pr\left(\mathcal{E}_{1}\cap\dots\cap\mathcal{E}_{\log^{2}k}\right)\le e^{-\left(k/2\right)\log^{2}k}=o\left(1/k!\right),
\]
as desired.
\end{proof}

\subsection{Structured maps}

It remains to define our family of structured maps $\mathcal{Z}_{k}$
and prove \cref{lem:structure-vs-randomness}. Basically, if quasirandomness
fails to hold then there are many pairs of sets $A,B$ which are shifts
of each other, and where these sets intersect they heavily constrain
the structure of $\pi$. This motivates our definition of $\mathcal{Z}_{k}$.
\begin{defn}
For $X\subseteq\left[k\right]$ and $L$-sets $A,B\subseteq X$ with
elements $a_{1}<\dots<a_{L}$ and $b_{1}<\dots<b_{L}$, let $G\left(A,B\right)$
be the graph on the vertex set $X$ with edge set $\{a_ib_i\}_{i=1}^L$. For $q,b\ge1$, a \emph{$(q,b)$-shift-system
for $\phi\in\S_{X,k}$ }is a choice of $\Delta_{1},\dots,\Delta_{q}\in [k]$
and sets $A_{1},B_{1},\dots,A_{q},B_{q}\subseteq X$ satisfying the
following properties.
\begin{itemize}
\item Each $A_{i}$ is a $\Delta_{i}$-shift of $B_{i}$ in $\phi$, and
\item the graph $\bigcup_{i=1}^{q}G\left(A_{i},B_{i}\right)$ has at most
$b$ connected components.
\end{itemize}
Then, let $\mathcal{Z}_{X,k}$ be the set of maps $\phi\in\S_{X,k}$
which have a $\left(q, b\right)$-shift-system for some $q, b$ satisfying
\[
(b+q)\log k+k(\log q+1)\le10k\log\log k,
\]
and let $\mathcal{Z}_{k}=\bigcup_{X\subseteq\left[k\right]}\mathcal{Z}_{X,k}$.
\end{defn}

We need to prove the two parts of \cref{lem:structure-vs-randomness} with this choice of $\mathcal{Z}_k$: first, that $\mathcal{Z}_{k}$ is not too large, and second, that all permutations
can be decomposed into a structured part and a quasirandom part.
\begin{proof}[Proof of \cref{lem:structure-vs-randomness}(1)]
Let $\phi\in\mathcal{Z}_{X,k}$ be a structured map having a $(q,b)$-shift-system
\[
\left(\Delta_{1},\dots,\Delta_{q},A_{1},B_{1},\dots,A_{q},B_{q}\right),
\]
and let $G_{i}=G\left(A_{i},B_{i}\right)$. Write $G = \cup_{i=1}^q G_i$. We first claim that to
specify $\phi$, it suffices to specify $q,b,X$ and the following data:
\begin{itemize}
\item the differences $\Delta_{1},\ldots,\Delta_{q}$,
\item subsets $A_{i}'\subseteq A_{i},B_{i}'\subseteq B_{i}$ (corresponding
to subgraphs $G_{i}':=G(A_{i}',B_{i}')\subseteq G_{i}$) for which
$G':=\bigcup_{i=1}^{q}G_{i}'$ is a spanning forest of $G$, and
\item the value of $\phi\left(v\right)$, for a single representative vertex
$v$ in each connected component of $G$.
\end{itemize}
Given the above data, the value of $\phi(x)$ can be determined for
every $x\in X$. Indeed, consider the representative $v$ of the component
of $x$ in $G'$, so that there is a unique path from $v$ to $x$ in
$G'$. Suppose the edges along this path come from the graphs $G_{i_{1}}',\dots,G_{i_{\ell}}'$.
Then the value of $\phi(x)$ must be $\phi(x)\pm\Delta_{i_{1}}\pm\Delta_{i_{2}}\pm\cdots\pm\Delta_{i_{\ell}}$,
where the sign of $\Delta_{i_{j}}$ is determined by whether the $j$-th
edge is oriented from $A_{i_{j}}'$ to $B_{i_{j}}'$ in the path from
$v$ to $x$.

Thus, to bound $\left|\mathcal{Z}_{k}\right|$ it suffices to count
the total number of ways to specify the above three pieces of data.
There are at most $k$ choices of $q$, $k^{q}$ choices of
$\Delta_{1},\ldots,\Delta_{q}$, and $k^{b}$ choices of the
values of $\phi(v)$ for each of the component representatives $v$. Since a forest on $\left|X\right|$ vertices
has at most $\left|X\right|-1\le k$ edges, the total number of edges among all the $G_i'$ is at most $k$, and so the number of choices
of the sets $A_{i}',B_{i}'$
is at most
\[
\sum_{\substack{L_{1},\dots,L_{q}\\
L_{1}+\dots+L_{q}\le k
}
}\prod_{i}\binom{\left|X\right|}{L_{i}}^{2}\le\sum_{\substack{L_{1},\dots,L_{q}\\
L_{1}+\dots+L_{q}\le k
}
}\prod_{i}\Big(\frac{ek}{L_{i}}\Big)^{2L_{i}}.
\]
Taking logarithms and applying Jensen's inequality to the concave
function $z\mapsto2z\log(ek/z)$, we find that
\[
\sum_{\substack{L_{1},\dots,L_{q}\\
L_{1}+\dots+L_{q}\le k
}
}\prod_{i}\Big(\frac{ek}{L_{i}}\Big)^{2L_{i}}\le\sum_{\substack{L_{1},\dots,L_{q}\\
L_{1}+\dots+L_{q}\le k
}
}\exp\Big(2k\log(eq)\Big)=\binom{k+q}{q}\exp\Big(2k\log\left(eq\right)\Big).
\]
We deduce that the number of choices of $\phi\in\mathcal{Z}_{X,k}$ having a $(q,b)$-shift-system is at most
\[
k^{q}\cdot k^{b}\cdot\binom{k+q}{q}\exp\Big(2k\log\left(eq\right)\Big)\le \exp\Big(2\big((q+b)\log k+k(\log q+1)\big)\Big)\le e^{20k\log\log k}.
\]
There are at most $2^k\cdot k^2\le e^{k\log \log k}$ ways to choose $X,b,q$, so we conclude that $|\mathcal Z_k|\le e^{21k\log \log k}$, as desired.
\end{proof}
Next, for \cref{lem:structure-vs-randomness}(2), in which
we need to find a structure-vs-randomness decomposition of every $\pi\in\mathcal{S}_{k}$,
we will iterate the following lemma. Say that $\phi\in\mathcal{S}_{X,k}$
is itself $\left(\alpha,q\right)$-quasirandom if $L_{\Delta}\left(\phi\right)\ge\alpha k$
for fewer than $q$ values of $\Delta\in[k]$ (this is analogous but
slightly different from the notion in \cref{def:quasirandomness} concerning
quasirandomness of a set of indices in a permutation).
\begin{lem}
\label{lem:weak-structure-vs-randomness}If $\phi\in\mathcal{S}_{X,k}$
is not $\left(\alpha,q\right)$-quasirandom, then there is $Y\subseteq X$
with $\left|Y\right|\ge\alpha k/2$ such that $\phi|_{Y}$ has a
$(q,k/q)$-shift-system.
\end{lem}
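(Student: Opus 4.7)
The plan is to translate the failure of $(\alpha,q)$-quasirandomness into a graph-theoretic condition on a union graph, and then extract a dense substructure via a short extremal argument. First I use non-quasirandomness to pick distinct $\Delta_1,\dots,\Delta_q\in[k]$ with $L_{\Delta_i}(\phi)\ge\alpha k$, along with witness sets $A_i,B_i\subseteq X$ of size at least $\alpha k$ so that $A_i$ is a $\Delta_i$-shift of $B_i$. Form $H=\bigcup_{i=1}^{q}G(A_i,B_i)$ on vertex set $X$. Each $G(A_i,B_i)$ is a disjoint union of paths (hence has maximum degree at most $2$), so $H$ has maximum degree at most $2q$. Moreover, an edge $\{u,v\}\in G(A_i,B_i)$ is characterized by $|\phi(u)-\phi(v)|=\Delta_i$, so since the $\Delta_i$ are distinct, the $G(A_i,B_i)$ have pairwise disjoint edge sets, and therefore $H$ has at least $q\alpha k$ edges.

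The crux of the proof is to show that the $\lceil k/q\rceil$ largest connected components of $H$ together cover at least $\alpha k/2$ vertices. I would argue by contradiction: suppose these top components cover only $S<\alpha k/2$ vertices in total, and order components by decreasing size $n_1\ge n_2\ge\cdots$. The top $\lceil k/q\rceil$ components contribute at most $\sum_{j\le k/q} q n_j = qS<q\alpha k/2$ edges (by the max-degree bound), while averaging gives $n_j<S/(k/q)<\alpha q/2$ for each remaining component, so each contributes at most $\binom{n_j}{2}\le n_j\cdot(\alpha q/2)/2$ edges, summing to at most $\alpha qk/4$ across them. Thus $H$ would have strictly fewer than $3q\alpha k/4<q\alpha k$ edges, a contradiction.

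Let $Y$ be the union of these top $\lceil k/q\rceil$ components, so that $|Y|\ge\alpha k/2$ and $H[Y]$ has at most $k/q$ connected components. To produce the $(q,k/q)$-shift-system for $\phi|_Y$, I keep the same shifts $\Delta_i$ and let $A_i'\subseteq A_i$, $B_i'\subseteq B_i$ consist of those matched pairs $(a_j,b_j)$ witnessing the shift whose both coordinates lie in $Y$. A direct check confirms that, since the retained pairs use the same index set $j$ in both $A_i'$ and $B_i'$, the rank-order matching between $A_i'$ and $B_i'$ coincides with this retained matching, so $A_i'$ remains a $\Delta_i$-shift of $B_i'$ in $\phi|_Y$. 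Then $\bigcup_i G(A_i',B_i')=H[Y]$ has at most $k/q$ components, as required.

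The main obstacle is the extremal claim in the second paragraph about covering many vertices with few components; once that is established, building the shift-system from $Y$ is essentially bookkeeping, relying on the fact that rank-matched subsets of shifted pairs inherit the shift property.
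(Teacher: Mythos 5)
Your argument is correct and follows the same overall strategy as the paper's: form the union graph $H=\bigcup_i G(A_i,B_i)$ with $\alpha k q$ edges and maximum degree at most $2q$, show that a small number of its largest components cover at least $\alpha k/2$ vertices, take $Y$ to be their union, and obtain a shift-system for $\phi|_Y$ by restricting each pair $(A_i,B_i)$ to the matched pairs lying entirely in $Y$. Where you diverge is in the extremal sub-argument that few components cover many vertices. The paper's route is to find a set $U$ of at least $\alpha k/2$ vertices whose degree is at least half the average $2\alpha kq/|X|$, note that each such vertex lies in a component of size at least $\alpha kq/|X|$, and conclude that the largest $b=|X|/(2q)\le k/q$ components must cover $U$. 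You instead argue directly by contradiction via double counting of edges: if the top $\lceil k/q\rceil$ components covered fewer than $\alpha k/2$ vertices, they would carry fewer than $q\alpha k/2$ edges (by the max-degree bound), the remaining components would all have fewer than $\alpha q/2$ vertices and hence carry fewer than $\alpha qk/4$ edges in total, and this falls short of the $\alpha kq$ edges $H$ actually has. Both arguments work; yours has the virtue of making explicit the edge-disjointness of the graphs $G(A_i,B_i)$ (via $|\phi(u)-\phi(v)|=\Delta_i$), a fact the paper uses implicitly when asserting $H$ has $\alpha kq$ edges, and of making the final ``few large components suffice'' step fully self-contained, whereas the paper's closing sentence quietly relies on a small contradiction of the same flavor. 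One small nit shared with the paper: you take $\lceil k/q\rceil$ components, which can slightly exceed $k/q$; this rounding is harmless in the regime $q\ll k$ in which the lemma is applied.
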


\begin{proof}
If $\phi$ is not $\left(\alpha,q\right)$-quasirandom, then there
are $1\le\Delta_{1}<\dots<\Delta_{q}\le k$ with each $L_{\Delta_{i}}\left(\pi\right)\ge\alpha k$.
For each $i$, let $A_{i}$ and $B_{i}$ be $(\alpha k)$-sets
for which $A_{i}$ is a $\Delta_{i}$-shift of $B_{i}$, and consider
the graph $G=\bigcup_{i=1}^{q}G\left(A_{i},B_{i}\right)$ on the vertex
set $X$. Note that each $G\left(A_{i},B_{i}\right)$ has maximum
degree at most 2, so $G$ has maximum degree at most $2q$. On the
other hand, $G$ has $\alpha kq$ edges, so has average degree $2\alpha kq/|X|$. The sum of the degrees which are at least half this average is at least $\alpha kq$, so there is a set $U$ of at least $(\alpha kq)/(2q)=\alpha k/2$ vertices with degree at least $\alpha kq/|X|$.

Every connected component which intersects $U$ has size at least
$\alpha kq/|X|$, so letting $b=(\alpha k/2)/(\alpha kq/|X|)=|X|/(2q)\le k/q$, the largest $b$ components of $G$ comprise at least $\alpha k/2$ vertices. Let $Y$ be the set of vertices in these components, and observe that $\phi|_{Y}$ has a
$(q,k/q)$-shift-system.
\end{proof}
We now prove part (2) of \cref{lem:structure-vs-randomness}.

\begin{proof}[Proof of \cref{lem:structure-vs-randomness}(2)]
Fix $\pi\in\S_{k}$. Our objective is to show that there is a partition
$Q\cup Z=\left[k\right]$ such that $Q$ is $(\log^{-4}k,\log^{5}k)$-quasirandom
in $\pi$, and $\pi|_{Z}\in\mathcal{Z}_{k}$.

The plan is to apply \cref{lem:weak-structure-vs-randomness}
repeatedly, continuing to extract structured parts from $\pi$ until
this is no longer possible. To be specific, we will
obtain sequences of vertex sets $\left[k\right]=X_{0}\supseteq X_{1}\supseteq\dots\supseteq X_{\ell}$
and $Y_{1},\dots,Y_{\ell}$, such that $Q:=X_{\ell}$ and $Z:=Y_{1}\cup\dots\cup Y_{\ell}$
satisfy the desired properties. Let $\alpha=\left(1/2\right)\log^{-4}k$
and $q=\log^{5}k$; the sets $X_{i}$ and $Y_{i}$ are
defined recursively as follows. For each $i\ge0$:
\begin{enumerate}
\item [(1)] If $X_{i}$ is $(\log^{-4}k,q)$-quasirandom in $\pi$, then
we stop (taking $\ell=i$).
\item [(2)] If $\pi|_{X_{i}}$ is not itself $\left(\alpha,q\right)$-quasirandom,
then by \cref{lem:weak-structure-vs-randomness} there is $Y_{i+1}\subseteq X_{i}$
with $\left|Y_{i+1}\right|\ge\alpha k/2$ such that $\pi|_{Y_{i+1}}$
has a $\left(q,k/q\right)$-shift-system. Set $X_{i+1}=X_{i}\setminus Y_{i+1}$.
\item [(3)] If neither of the previous cases hold, then $X_{i}$ is not $(\log^{-4}k,q)$-quasirandom in $\pi$, so there exist $q$
values of $\Delta$ for which $L_{\Delta}(\pi,X_{i})\ge k\log^{-4}k=2\alpha k$. Since $\pi|_{X_{i}}$ is itself $\left(\alpha,q\right)$-quasirandom, for at least one of these values of $\Delta$ we have $L_{\Delta}(\pi|_{X_{i}})<\alpha k$.
This implies that there are sets $A\subseteq X_{i}$ and $B\subseteq\left[k\right]$
of size $2\alpha k$ such that $A$ is a $\Delta$-shift of $B$,
and $\left|B\cap X_{i}\right|<\alpha k$. Then let $Y_{i+1}$ be
the set of all $a\in A$ such that $\pi^{-1}(\pi(a)-\Delta)\notin X_{i}$ (informally speaking, this is the set of all $a\in A$ which are ``paired'' with some $b\in B\setminus X_i$). Observe that $\left|Y_{i+1}\right|>2\alpha k-\alpha k=\alpha k$, and set $X_{i+1}=X_{i}\setminus Y_{i+1}$.
\end{enumerate}
At the end of this recursive construction, $Q=X_{\ell}$ is $(\log^{-4}k,\log^{5}k)$-quasirandom
in $\pi$. Since each $\left|Y_{i}\right|\ge\alpha k/2$, the set
$Z=Y_{1}\cup\dots\cup Y_{\ell}$ has size at least $\ell\alpha k/2$ (so $\ell\le 2/\alpha=4\log^4 k$).
Also, we can see by induction that for $Z_{i}:=Y_{1}\cup\dots\cup Y_{i}$,
each $\pi|_{Z_{i}}$ has an $\left(q_{i},b_{i}\right)$-shift system,
for some $q_{i}\le iq$ and $b_{i}\le ik/q$. Indeed,
suppose that $\pi|_{Z_{i-1}}$ has a $\left(q_{i-1},b_{i-1}\right)$-shift-system.
If $Y_{i}$ was defined via case 2, then $\pi|_{Y_{i}}$ has a $\left(q,k/q\right)$-shift-system,
and we can simply combine the two shift-systems to give a $\left(q_{i},b_{i}\right)$-shift
system for $\pi|_{Z_{i}}$, with $q_{i}=q_{i-1}+q$ and $b_{i}=b_{i}+k/q$.
If $Y_{i}$ was defined via case 3, then we can take $q_{i}=q_{i-1}+1$
and $b_{i}=b_{i-1}$, obtaining a shift-system for $\pi|_{Z_{i}}$
by adding $\Delta_{q_{i}}=\Delta$ and the sets $B_{q_{i}}=Y_{i}$
and $A_{q_{i}}=\pi^{-1}(\pi(B_{q_{i}})+\Delta)$ to our shift-system for $\pi|_{Z_{i-1}}$.
Note that $G\left(A_{q_{i}},B_{q_{i}}\right)$ consists of edges between
$Y_{i}$ and $Z_{i-1}$, meaning that enlarging the shift-system does
not create any new connected components in the associated graph.

We have proved that $\pi|_{Z}$ has a $\left(q_{\ell},b_{\ell}\right)$-shift
system, with $q_{\ell}\le\ell q$ and $b_{\ell}\le\ell k/q$.
Recalling that $\ell\le 4\log^4 k$, and the definitions
$\alpha=\left(1/2\right)\log^{-4}k$ and $q=\log^{5}k$, we observe
that
\[
(b_{\ell}+q_{\ell})\log k+k(\log q_{\ell}+1)\le10k\log\log k,
\]
meaning that $\pi|_{Z}\in\mathcal{Z}_{k}$.
\end{proof}

\section{\label{sec:concluding}Concluding remarks}

In this paper we proved that for $n=2000k^{2}\log\log k$, w.h.p.\ a
random $\sigma\in\mathcal{S}_{n}$ is $k$-universal. While the constant
2000 can clearly be improved, it seems that new ideas are necessary
for an asymptotic improvement. In particular, the bound $|\mathcal{Z}_k| \le e^{O(k\log \log k)}$ in \cref{lem:structure-vs-randomness} is best-possible: indeed, consider the family $\mathcal{L}_k$ of all permutations of length $k$ which can be decomposed into $\log^{10} k$ increasing subsequences of length $k\log^{-10} k$. Then $\mathcal L_k\subseteq \mathcal Z_k$ but $|\mathcal{L}_k| = e^{\Theta(k\log \log k)}$.

Also, one may naively hope that with a better structure-vs-randomness lemma it may be possible to strengthen the notion of ``structuredness'' to monotonicity. However, this is not possible, because a permutation can be extremely non-quasirandom and have no long monotone subsequences. Indeed, if $k=\ell^2$ and $\pi$ is the ``tilted grid'' permutation $a\ell + b + 1 \mapsto b\ell + a + 1$, for $0\le a,b<\ell$, then the longest increasing subsequence of $\pi$ has length $O(\sqrt{k})$, but $\pi$ is not even $(1/4, k/4)$-quasirandom.

A different direction towards \cref{conj:noga} is to directly
study the containment probabilities $\Pr\left(\pi\in\sigma\right)$.
Indeed, if one could show that for $\Pr\left(\pi\notin\sigma\right)=o\left(1/k!\right)$
for $n=\left(1+\varepsilon\right)k^{2}/4$, random $\sigma\in \S_n$, and any $\pi\in \S_k$, then \cref{conj:noga}
would follow directly from the union bound. One may naively conjecture the very strong bound $\Pr\left(\pi\notin\sigma\right)=e^{-\Omega(n)}$ for all $\pi$ (indeed, this is true for the identity permutation $\pi=1_k\in \S_k$, if say $n=2k^2$), but, perhaps surprisingly, using a construction of Fox~\cite[Theorem~6]{Fox13}
it is possible to show that when $n=k^{2+o\left(1\right)}$, for almost
all $\pi\in\mathcal{S}_{k}$ we have $\Pr\left(\pi\notin\sigma\right)\ge\exp\left(-k^{3/2+o\left(1\right)}\right)$. We conjecture that this bound is essentially tight.

\begin{conjecture}\label{conj:containment-strong}
If $n=1000k^2$, $\pi$ is a permutation of length $k$, and $\sigma$ is a uniform random permutation of length $n$, then 
\[
\Pr(\pi\not\in\sigma) \le \exp(-k^{3/2 +o(1)}).
\]
\end{conjecture}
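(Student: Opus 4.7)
The plan is to push the multi-threaded scanning strategy of the proofs of \cref{thm:main} and \cref{thm:almost-all} to use $\ell = k^{1/2+o(1)}$ threads, since this matches the target: a single thread needs $k$ ones out of $\Theta(k)$ Bernoulli$(1/2)$ trials and thus fails with probability $e^{-\Omega(k)}$, so amplifying over $\ell$ mostly-independent threads yields failure probability $e^{-\Omega(\ell k)} = \exp(-k^{3/2+o(1)})$. Applying \cref{lem:matrix-dominated} with $n = 1000k^2$ reduces the problem to showing that for a fixed $\pi \in \S_k$, a random $(2k)\times m$ zero-one matrix $M$ with $m = 250k$ contains $\pi$ except with probability $\exp(-k^{3/2+o(1)})$.

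The scanning setup would proceed as follows. Apply a strengthened version of \cref{lem:structure-vs-randomness} with parameters $\alpha \approx k^{-1/2}(\log k)^{-C}$ and $q \approx k^{1/2}(\log k)^{C}$ to obtain a decomposition $[k] = Q \cup Z$ with $Q$ being $(\alpha,q)$-quasirandom in $\pi$. Crucially, because we are bounding $\Pr(\pi \notin \sigma)$ for a single fixed $\pi$ and not union-bounding over $\pi$, the size of the structured family $\mathcal{Z}_k$ plays no role; this is the key difference from the proof of \cref{thm:main}, and is what allows these much more aggressive parameters. A simple probabilistic argument produces $\ell = k^{1/2+o(1)}$ shifts $t_1 < \ldots < t_\ell$ whose pairwise differences all avoid the bad set $F = \{\Delta : L_\Delta(\pi, Q) \ge \alpha k\}$ of size less than $q$. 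Condition on the event $\mathcal{A}$ that every run of zeros in $M$ has length at most $\log^2 k$, which by \cref{lem:run-tail} holds except with probability $e^{-\Omega(\log^2 k)}$, negligible at the target scale. On the quasirandom part, any two threads share at most $\alpha k \cdot \log^2 k$ entries, so the cumulative overlap of thread $i$ with earlier threads on $Q$ is $\ell \cdot \alpha k \log^2 k = o(k)$; combined with a Chernoff bound on the remaining fresh entries, this handles the quasirandom contribution.

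The main obstacle, which likely requires genuinely new ideas, is controlling the structured portions $T_i^Z$: two threads whose shift difference synchronizes with the shift system of $\pi|_Z$ may overlap heavily on $Z$, and the analogue of event $\mathcal{B}$ from the proof of \cref{thm:main} gave the required control only with failure probability $o(1)$, whereas we need $\exp(-k^{3/2+o(1)})$. A naive union bound over placements of $\pi|_Z$ in $M$ is much too lossy to recover this. One promising path is to iterate the decomposition recursively inside $Z$, peeling off further quasirandom layers until the residual core decomposes into, say, $O(k^{1/2})$ monotone blocks whose placements can each be controlled by an independent Chernoff tail at scale $k$. An essential tension is that \cite{Fox13}'s construction saturates the bound $\exp(-k^{3/2+o(1)})$ precisely via ``tilted grid'' patterns whose shift systems are richest, so any proof of \cref{conj:containment-strong} must handle these extremal patterns with essentially no slack, meaning a direct extension of the techniques of this paper seems insufficient.
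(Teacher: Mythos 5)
The statement you are trying to prove is stated in the paper as \cref{conj:containment-strong}, an open conjecture; the paper offers no proof of it, only partial evidence (a proposition giving the weaker bound $\exp(-\Omega(k^{5/4}))$ for the quasirandom permutations $\pi\in\mathcal{Q}_k$, plus the observation that the identity satisfies $\exp(-\Omega(k^2))$). Your proposal is likewise not a proof: you explicitly leave open the control of the structured portions $T_i^Z$ across $k^{1/2+o(1)}$ threads, and you correctly identify that this is where the real difficulty lies (the paper's concluding remarks single out exactly these ``hybrid'' permutations, such as the tilted grid and the family $\mathcal{L}_k$, as the hard cases). So the honest verdict is that the statement remains unproven and your outline, while sensibly aligned with the heuristics in \cref{sec:concluding}, does not close the gap.

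Beyond the admitted gap, there is a concrete error in the part you present as routine. You condition on the event $\mathcal{A}$ that every run of zeros has length at most $\log^2 k$ and assert that its failure probability $e^{-\Omega(\log^2 k)}$ is ``negligible at the target scale.'' It is the opposite: $e^{-\Omega(\log^2 k)}$ is vastly \emph{larger} than $\exp(-k^{3/2+o(1)})$, so the bound
\[
\Pr(\pi\notin\sigma)\le\Pr(\neg\mathcal{A})+\Pr(\text{all threads fail}\mid\mathcal{A})
\]
is already destroyed by the first term. Any argument at this probability scale must replace $\mathcal{A}$ by an event whose failure probability is itself $\exp(-k^{3/2+o(1)})$; this is precisely why the paper's $k^{5/4}$ proposition works with the event $\mathcal{A}'$ allowing runs up to $k^{1/4}$ with up to $k/2$ exceptional rows, and the paper warns that even then the conditioning interacts delicately with the scanning procedure. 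A similar issue afflicts your use of $L_\Delta(\pi,Q)\le\alpha k$ with $\alpha\approx k^{-1/2}$: the proof of \cref{lem:random-L} shows that shifts of length $3\sqrt{k}$ are already typical, so no permutation-independent quasirandomness threshold below $\Theta(\sqrt{k})$ is available, and your overlap accounting on $Q$ needs to be redone at the boundary of what quasirandomness can deliver. These are not presentational issues; they are the reasons the statement is a conjecture.
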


Note that \cref{conj:containment-strong} would immediately imply that a typical $\sigma \in \mathcal{S}_n$ is $k$-universal for $n=1000k^2$. As some evidence for \cref{conj:containment-strong}, we observe that \cref{thm:almost-all} can be strengthened as follows.

\begin{prop}
Let $\mathcal{Q}_{k}\subseteq\mathcal{S}_{k}$ be the set in \cref{thm:almost-all}. If $n=20k^{2}$, and $\sigma$ is a uniform random permutation of length $n$, then
\[
\Pr(\pi\not\in\sigma) \le \exp(-\Omega(k^{5/4})).
\]
\end{prop}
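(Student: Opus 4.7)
\medskip

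I would follow the template of the proof of Theorem~3.2, but run $T := \lfloor ck^{1/4} \rfloor$ threads of the multi-threaded scanning procedure (for a small constant $c > 0$) rather than $\log^2 k$ threads. By Lemma~2.1 it suffices to bound $\Pr(\pi \notin M)$ for a uniform random $(2k) \times (5k)$ zero-one matrix $M$. The thread positions $t_1, \dots, t_T \in [k]$ may be chosen arbitrarily (say, evenly spaced): since $\pi \in \mathcal{Q}_k$, the bound $L_\Delta(\pi) \le 3\sqrt{k}$ controls pairwise thread overlaps for every choice.

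Following the proof of Theorem~3.2, I would first condition on an event $\mathcal{A}$ of the form ``all runs $r(y,x) \le R$'' for some parameter $R$. Under $\mathcal{A}$, the quasirandomness of $\pi$ bounds pairwise thread overlaps by $3\sqrt{k}(R+1)$; choosing $R = \Theta(k^{1/4})$ ensures that each thread has at least $4k$ fresh entries conditional on the previous ones, so each thread fails conditionally with probability at most $e^{-k/2}$ by a Chernoff bound. Chaining these conditional bounds across the $T$ threads yields
\[
\Pr(\text{all threads fail} \,\cap\, \mathcal{A}) \le e^{-Tk/2} = e^{-\Omega(k^{5/4})}.
\]
So on the event $\mathcal{A}$, the proposition follows.

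The main obstacle is that the naive union bound $\Pr(\neg \mathcal{A}) \le 10k^2 \cdot 2^{-R} = e^{-\Omega(k^{1/4})}$ is much larger than the desired $e^{-\Omega(k^{5/4})}$, so $\Pr(\neg \mathcal{A})$ dominates. To close this gap I would replace $\mathcal{A}$ with a more refined event $\mathcal{A}'$ playing the role that $\mathcal B$ plays in the proof of Theorem~1.1: rather than bounding individual run lengths, $\mathcal{A}'$ bounds the aggregate sum of runs encountered along any admissible scanning path. Concretely, using Lemma~2.4, for each thread shift $t \in [k]$ and each sequence of $k$ start-columns $c_1 < \dots < c_k$ (giving positions in distinct rows $\pi(1)+t, \dots, \pi(k)+t$), we require $\sum_j r(\pi(j)+t, c_j) < \rho$ for a threshold $\rho$. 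By Lemma~2.4 each such sum exceeds $\rho$ with probability at most $e^{-\rho/8}$; a union bound over the $k \binom{5k}{k} \le e^{O(k)}$ choices of $(t, c_1, \dots, c_k)$ gives $\Pr(\neg \mathcal{A}') \le e^{-\rho/8 + O(k)}$, which is $e^{-\Omega(k^{5/4})}$ for $\rho = \Theta(k^{5/4})$.

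With $\mathcal{A}'$ in hand, I would then rerun the thread analysis, using the ``pretending'' trick from Section~3 to truncate runs beyond a pointwise threshold without affecting the scanning output on the event $\mathcal{A}'$. The quasirandomness of $\pi$, together with the aggregate bound furnished by $\mathcal{A}'$, should suffice to guarantee that conditionally each thread still has enough fresh entries to apply Chernoff. The delicate point---and the part I expect to be the main technical obstacle---is calibrating $\rho$ and the pretending threshold so that $\mathcal{A}'$ is simultaneously strong enough to enable the Chernoff chaining across the $T = \Theta(k^{1/4})$ threads and weak enough that $\Pr(\neg\mathcal{A}') \le e^{-\Omega(k^{5/4})}$; modulo this calibration, both contributions to $\Pr(\pi \notin M)$ are $e^{-\Omega(k^{5/4})}$, as required.
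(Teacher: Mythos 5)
Your strategy of running $T=\Theta(k^{1/4})$ threads and conditioning on a run-length event whose failure probability is $e^{-\Omega(k^{5/4})}$ is the right shape, and you correctly observe that the naive ``all runs $\le R$'' event cannot be pushed to $R=\Theta(k^{1/4})$ by a plain union bound. But the replacement event $\mathcal{A}'$ you propose has a fatal calibration problem. A single thread in a $(2k)\times(5k)$ matrix exposes at most one entry per column, so along any \emph{actual} scanning trajectory for a thread $t$ the quantity $\sum_j r(\pi(j)+t,c_j)$ (the total number of zeros encountered) is trivially at most $m=5k$. You need $\rho = \Theta(k^{5/4}) \gg 5k$ for the union bound $k\binom{5k}{k}e^{-\rho/8}$ to be $e^{-\Omega(k^{5/4})}$, so your $\mathcal{A}'$ is never violated by the positions that the scanning procedure actually visits: it gives no information beyond the trivial bound. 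Moreover, even a nontrivial aggregate bound on the total run length of thread $t_i$ would not be the right quantity; the Chernoff chaining needs control of the pairwise intersections $|T_i\cap T_j|$, which a bound on $\sum_j r(\cdot,\cdot)$ along a single thread does not furnish. This is precisely why the event $\mathcal{B}$ in the proof of \cref{thm:main} could be effective there but cannot be imitated here: $\mathcal{B}$ bounds the time spent on the \emph{structured part only}, which is a strict subset of the thread, whereas in the present proposition all of $\pi$ is quasirandom, so the analogous aggregate bound is over the whole thread and hence trivial.

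The fix the paper has in mind is of a different kind: relax the pointwise bound, not aggregate it. Define $\mathcal{A}'$ to be the event that $r(y,x)\le k^{1/4}$ for all but at most $k/2$ positions $(y,x)$. Since the run lengths in distinct rows are independent, a union bound over which $k/2$ rows contain exceptions gives $\Pr(\neg\mathcal{A}')\le\binom{2k}{k/2}\bigl(5k\cdot 2^{-k^{1/4}}\bigr)^{k/2}=e^{-\Omega(k^{5/4})}$. On $\mathcal{A}'$, one deletes the at most $k/2$ ``bad'' rows, keeping a submatrix $M'$ with at least $3k/2$ rows in which every run has length at most $k^{1/4}$. Running $\Theta(k^{1/4})$ threads in $M'$, the quasirandomness bound $L_\Delta(\pi)\le 3\sqrt{k}$ combined with the pointwise run bound $k^{1/4}$ gives $|T_i\cap T_j|\le 3\sqrt{k}\cdot k^{1/4}=3k^{3/4}$, so each thread retains at least $4k$ fresh entries and the Chernoff chaining closes. (The paper flags as the delicate point that the scanning procedure after row-deletion depends on the conditioning event in a more intricate way than in \cref{sec:quasirandom}; that subtlety needs care, but it is a different and surmountable issue, not the one your argument runs into.)
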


To prove this, one can adapt the proof of \cref{thm:almost-all} in the following way. Instead of the event $\mathcal A$ (that the run lengths in $M$ are at most $\log^2 k$), we define the weaker event $\mathcal{A}'$ that the run lengths in $M$ are at most $k^{1/4}$, with up to $k/2$ exceptions. This event holds with probability $1-e^{-\Omega(k^{5/4})}$. Then, we condition on $\mathcal A'$, and delete the set of at most $k/2$ rows from $M$ which have exceptionally long runs of zeros, yielding a matrix $M'$ with at least $3k/2$ rows, in which no row has a run of length longer than $k^{1/4}$. For each $\pi\in \mathcal Q_k$, we then run $\Omega(k^{1/4})$ threads of our multi-threaded scanning procedure and show that it fails to find $\pi$ with probability at most $e^{-\Omega(k^{5/4})}$. We remark that this last step is more delicate than the corresponding argument in \cref{sec:quasirandom}, since the scanning procedure depends on the event we are conditioning on in a more complicated way.

The above result shows that a bound halfway to \cref{conj:containment-strong} from the trivial bound $e^{-\Omega(k)}$ holds for ``very quasirandom'' permutations. It is also easy to check that $\Pr(1_k \in \sigma) \le \exp(-\Omega(k^2))$, so \cref{conj:containment-strong} holds for the ``most structured'' permutation $1_k$ as well. What seem most difficult are ``hybrid'' permutations like the tilted square and the members of the family $\mathcal{L}_k$ defined earlier in this section, which are neither ``very quasirandom'' nor ``very structured''.

There are many other facts about containment of a single permutation that appear to be unknown. For example, the following weakening of \cref{conj:noga} appears to be open.

\begin{conjecture}\label{conj:noga-weak}Fix $\varepsilon>0$, and let $n=\left(1+\varepsilon\right)k^{2}/4$. Consider $\pi\in\S_k$, and let $\sigma\in S_{n}$ be a random permutation of order $n$.
Then w.h.p.\ $\sigma$ contains $\pi$.
\end{conjecture}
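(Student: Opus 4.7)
Since \cref{conj:noga-weak} is stated as an open problem and substantially strengthens what is currently known even for individual patterns, my plan is necessarily speculative. The key simplification over \cref{conj:noga} is that no union bound over $\S_k$ is required, so even moderate per-pattern tail bounds suffice. On the other hand, the tight threshold $n = (1+\varepsilon)k^2/4$ forces any matrix coupling in the style of \cref{lem:matrix-dominated} to produce a matrix of shape roughly $\Theta(k)\times\Theta(k)$, which leaves no slack for the multi-threaded scanning procedure of \cref{sec:multi-threaded-scanning} to be run with more than $O(1)$ threads per pattern.

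The plan is to use \cref{lem:structure-vs-randomness} to split $\pi$ into a quasirandom part $\pi|_Q$ and a structured part $\pi|_Z$, and argue separately that each part is found in $\sigma$. For $\pi|_Q$, I would use a second-moment argument: the expected number of copies of $\pi|_Q$ at a given set of positions is of order $\bigl((1+\varepsilon)e^2/4\bigr)^{|Q|}/\mathrm{poly}(k)$, which is exponentially large, and the quasirandomness bound $L_\Delta(\pi,Q)\le k\log^{-4}k$ for most $\Delta$ should force the second moment to be comparable to the first-moment squared, because $\Delta$-shift bounds control how many pairs of copies of $\pi|_Q$ can share a large overlap. For $\pi|_Z$, I would exploit the $(q,b)$-shift-system structure to decompose $\pi|_Z$ into $O(b)$ components, each of which is essentially layered-monotone and can be embedded in $\sigma$ via the Vershik-Kerov-Logan-Shepp theorem on longest increasing subsequences, which is already tight at the threshold $k^2/4$. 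The two embeddings then need to be combined while respecting the interleaving of $Q$ and $Z$ inside $[k]$; this should be possible by partitioning the column set of the coupling matrix into consecutive blocks according to the position type and applying the two arguments blockwise.

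The main obstacle is the case of \emph{hybrid} permutations such as the tilted grid $a\ell+b+1\mapsto b\ell+a+1$ singled out at the end of \cref{sec:concluding}: these are neither long-monotone nor quasirandom, and the decomposition $\pi=\pi|_Q\cup\pi|_Z$ is unbalanced in a way that prevents naive product-space arguments, since copies of $\pi|_Q$ are strongly correlated through the global scaffolding imposed by $\pi|_Z$. I expect that the right approach is an iterative structure-vs-randomness decomposition at multiple scales, combined with a conditional argument in which one first exposes enough of $\sigma$ to produce many partial embeddings of $\pi|_Z$, and then shows that with positive probability at least one of these extends to a full copy of $\pi$ on the unexposed portion. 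Making this precise would likely require sharpening \cref{lem:structure-vs-randomness} so that the structured maps in $\mathcal{Z}_k$ themselves admit a canonical layered monotone sub-decomposition, allowing the tight LIS threshold to be applied layer by layer; producing such a sub-decomposition appears to be the main new idea needed to close the gap from $k^2\log\log k$ to $k^2/4$.
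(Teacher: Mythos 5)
This statement is presented in the paper as an \emph{open conjecture}; the paper offers no proof of it, so there is no ``paper approach'' to compare against, and you are right to flag the attempt as speculative. But the sketch as written contains substantive problems beyond being incomplete, and it is worth being precise about where it breaks.

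The most serious issue is the claimed treatment of $\pi|_Z$. You propose to decompose the structured map into $O(b)$ components ``each of which is essentially layered-monotone'' and then invoke the Logan--Shepp/Vershik--Kerov theorem layer by layer. But a $(q,b)$-shift-system does not encode monotonicity: the constraint $\pi(a_i)=\pi(b_i)+\Delta$ forces pairs of values to differ by a fixed amount, with no control on whether the indices $a_1<\dots<a_L$ map to an increasing, decreasing, or arbitrary sequence. In fact the paper's concluding remarks explicitly rule out the program of upgrading structuredness to monotonicity, since there are highly non-quasirandom permutations with no long monotone subsequences. Even granting a monotone-layer decomposition (as in the family $\mathcal{L}_k$), the VKLS theorem only controls the \emph{longest single} monotone subsequence of $\sigma$; it does not say that one can find $\log^{10}k$ disjoint increasing subsequences of length $k\log^{-10}k$ located at prescribed relative positions, let alone interleaved with the embedding of $\pi|_Q$. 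No lemma in the paper, nor any standard tool, gives this simultaneous embedding at $n=(1+\varepsilon)k^2/4$.

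The quasirandom half is also under-justified. You assert that the $L_\Delta(\pi,Q)\le k\log^{-4}k$ bounds ``should force the second moment to be comparable to the first-moment squared,'' but the paper's $L_\Delta$ machinery was designed specifically to bound overlaps between \emph{threads in the scanning algorithm}, not to bound pair-correlations in a second-moment count of copies of $\pi|_Q$ in $\sigma$. These are genuinely different quantities: the dominant contribution to $\E[X^2]$ for a pattern-count random variable $X$ comes from pairs of copies that overlap in a few positions in $[n]$ without any shift structure, and the variance of increasing-subsequence counts at the threshold $n\approx k^2/4$ is already known to be badly behaved (this is exactly why the LIS threshold is hard). Finally, the ``combine blockwise'' step for interleaving $Q$ and $Z$ is named but not described, and you yourself flag the hybrid permutations (e.g.\ the tilted grid) as the unresolved obstruction. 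So the proposal identifies the right obstacles but does not overcome any of them; as it stands none of the three steps (quasirandom embedding, structured embedding, interleaving) is established, and the middle one starts from a false premise about what $\mathcal{Z}_k$ looks like.
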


More generally, for a given permutation $\pi\in\mathcal S_k$, it would be interesting to understand the ``threshold'' value of $n$ above which a random $\sigma\in \S_n$ typically contains $\pi$. It is easy to see that such a threshold must always lie somewhere between $(1/e^2-o(1))k^2$ and $(1+o(1))k^2$, and it seems plausible that the threshold is about $k^2/e^2$ for almost all $\pi\in\mathcal S_k$.

\vspace{3mm}
\textbf{Acknowledgments. }We would like to thank Asaf Ferber for suggesting the crucial idea of adaptively exposing information multiple times to ``amplify'' containment probabilities, for helpful suggestions on the writing of the paper, and for several other stimulating discussions. The first author would also like to thank Jacob Fox, Ben Gunby, and Huy Pham for helpful conversations.

\end{document}